\definecolor{shadecolor}{gray}{0.875}
\newtheorem{thrm}{Theorem}[section]
\newtheorem{lem}[thrm]{Lemma}
\newtheorem{cor}[thrm]{Corollary}
\newtheorem{prop}[thrm]{Proposition}
\newtheorem*{thrma}{Theorem A}
\newtheorem*{thrmb}{Theorem B}
\theoremstyle{definition}
\newtheorem{defn}[thrm]{Definition}
\newtheorem{exmple}[thrm]{Example}
\newtheorem{rmk}[thrm]{Remark}
\newtheorem*{ack}{Acknowledgments}
\DeclareMathOperator{\Eff}{\overline{Eff}}
\DeclareMathOperator{\Supp}{Supp}
\DeclareMathOperator{\vol}{vol}
\DeclareMathOperator{\Vol}{Vol}
\DeclareMathOperator{\mult}{mult}
\DeclareMathOperator{\bdp}{{\bf B}_{+}^{\rm div}}
\newcommand{\factor}[2]{\left. \raise 1pt\hbox{\ensuremath{#1}} \right/
        \hskip -2pt\raise -3pt\hbox{\ensuremath{#2}}}
\numberwithin{equation}{thrm}
\begin{document}

\title{Volume  and Hilbert function of $\mathbb R$-divisors}

\author{Mihai Fulger}
\address{Department of Mathematics, Princeton University\\
Princeton, NJ \, \, 08544}
\address{Institute of Mathematics of the Romanian Academy, P. O. Box 1-764, RO-014700,
Bucharest, Romania}
\email{afulger@princeton.edu}

\author{J\'anos Koll\'ar}
\address{Department of Mathematics, Princeton University\\
Princeton, NJ \, \, 08544}
\email{kollar@math.princeton.edu}
\author{Brian Lehmann}
\address{Department of Mathematics, Boston College  \\
Chestnut Hill, MA \, \, 02467}
\email{lehmannb@bc.edu}

\maketitle

\section{Introduction}

Let $X$ be a proper, normal algebraic variety of dimension $n$ over a 
field $K$ and $D$ an $\mathbb R$-divisor on $X$.
The {\it Hilbert function} of $D$ is the function
$$
{\mathcal H}(X,D): m\mapsto 
h^0( mD):=\dim_{K} H^0(X,\mathcal O_X(\lfloor mD\rfloor));
$$
defined for all $m\in \mathbb R$.
If $D$ is an ample Cartier divisor then ${\mathcal H}(X,D) $
agrees with the usual Hilbert polynomial whenever $m\gg 1$
is an integer, but in general
 ${\mathcal H}(X,D)$ is not  a polynomial, not  even if $D$ is a $\mathbb Z$-divisor and 
  $m\in \mathbb Z$.
The simplest numerical invariant associated to the  Hilbert function
is the {\it volume} of $D$, defined as 
 $$\vol(D):=\limsup_{m\to\infty}\frac{ h^0(mD)}{m^n/n!}.$$

If $E$ is an effective $\mathbb R$-divisor, then 
$$
h^0( mD-mE)\leq h^0( mD)\leq h^0( mD+mE)
\eqno{(*)}
$$
holds for every $m>0$, hence
$$
\vol(D-E)\leq  \vol(D)\leq \vol(D+E).\eqno{(**)}
$$
Furthermore, if equality holds in $(*)$ for every $m\gg 1$ then
equality holds in $(**)$. The aim of this note is to prove the converse
for {\it big} divisors, that is, when $\vol(D)>0$.  
Although the volume does not determine the Hilbert function,
we prove that
$$
\begin{array}{ll}
{\mathcal H}(X,D)\equiv {\mathcal H}(X,D-E)
\quad\Leftrightarrow\quad  \vol(D)=\vol(D-E)& \mbox{and}\\[0.5ex]
{\mathcal H}(X,D)\equiv {\mathcal H}(X,D+E)
\quad\Leftrightarrow\quad  \vol(D)=\vol(D+E).
\end{array}
$$
As a byproduct of the proof we also obtain a
characterization of such divisors $E$ in terms of the 
 negative part $N_{\sigma}(D)$ of  the {\it Zariski--Nakayama-decomposition}  (also called $\sigma$-decomposition) and of the divisorial part of the 
{\it augmented base locus} $\bdp(D)$; see
\cite{nakayama04}, \eqref{defnegativepart} and \eqref{aug.bs.defn} for definitions.

Another interesting consequence is that the answer depends
only on the $\mathbb R$-linear equivalence class of $D$.
This is obvious for $\mathbb Z$-linear equivalence, but
 it can easily happen that   $D'\sim_{\mathbb R}D$
yet  $h^0(X, mD)\neq h^0(X, mD')$ for every $m> 0$; see (\ref{Hf.lineq.exmp}). 
In fact, 
the only relationship between 
${\mathcal H}(X,D) $ and ${\mathcal H}(X,D') $ that we know of is 
$\vol(D)=\vol(D')$. 

Our main results are the following.

\begin{thrma} Let  $X$ be a proper, normal algebraic variety  over a perfect field, $D$ a big $\mathbb R$-divisor on $X$ and $E$ an effective $\mathbb R$-divisor on $X$. Then the following are equivalent.
\begin{enumerate}[i)]
\item $\vol(D-E)=\vol(D)$.
\item $E\leq N_{\sigma}(D)$.
\item $h^0(mD'-mE)=h^0(mD')$ for every $D' \sim_{\mathbb{R}} D$ and all $m>0$.
\item $h^0(mD-mE)=h^0(mD)$ for all $m>0$.
\end{enumerate}
Furthermore, if  $D$ is  $\mathbb R$-Cartier and nef then these are also equivalent to 
\begin{enumerate}[  v)]
\item $E=0$.
\end{enumerate}
\end{thrma}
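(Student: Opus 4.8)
The plan is to establish the cycle of equivalences $i) \Rightarrow ii) \Rightarrow iii) \Rightarrow iv) \Rightarrow i)$, with the extra nef clause handled separately at the end. The implications $iii) \Rightarrow iv) \Rightarrow i)$ are essentially formal: $iv)$ is the special case $D' = D$ of $iii)$, and $iv)$ together with the volume definition $\vol(D) = \limsup_m h^0(mD)/(m^n/n!)$ immediately forces $\vol(D-E) = \vol(D)$. So the content lies in $i) \Rightarrow ii)$ and $ii) \Rightarrow iii)$.

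For $ii) \Rightarrow iii)$, I would use the defining property of the negative part $N_\sigma(D)$: for any $\mathbb{R}$-Cartier (or mobile-part) considerations aside, $H^0(X, \mathcal{O}_X(\lfloor mD' \rfloor))$ is unchanged when one subtracts the negative part, because every section of $mD'$ vanishes along $N_\sigma(mD') $ to the appropriate order. More precisely, writing $D' = P_\sigma(D') + N_\sigma(D')$ and using $N_\sigma(D') = N_\sigma(D)$ (the $\sigma$-decomposition depends only on the $\mathbb{R}$-linear equivalence class), one shows $\lfloor mD' - mE \rfloor$ and $\lfloor mD' \rfloor$ have the same global sections whenever $0 \le E \le N_\sigma(D)$, since the difference is supported on the negative part and contributes no new sections. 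This requires care with the floor functions and with the fact that $E$ need not be a $\mathbb{Z}$-divisor, but the key input — that sections of a divisor automatically vanish along $N_\sigma$ — is standard from Nakayama's theory.

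The main obstacle is $i) \Rightarrow ii)$: from the single numerical equality $\vol(D-E) = \vol(D)$ we must recover the divisorial inequality $E \le N_\sigma(D)$. The natural approach is contrapositive: suppose some component $\Gamma$ of $E$ appears with coefficient exceeding that of $N_\sigma(D)$; then I want to produce a strict drop in volume. One tool is the derivative/concavity of the volume function along the ray $D - t\Gamma$, governed by restricted volumes or by the identity $\frac{d}{dt}\vol(D - t\Gamma)\big|_{t=0^+}$ being controlled by $\operatorname{ord}_\Gamma$ of the asymptotic base locus; since $\Gamma \not\le N_\sigma(D)$ means $\Gamma$ is not in the stable base locus behaviour forced by $N_\sigma$, subtracting it strictly decreases volume. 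Alternatively, and perhaps more robustly over a perfect (not necessarily algebraically closed) field, one passes to the pullback under a resolution and uses that $\vol(D) = \vol(P_\sigma(D))$ together with the strict monotonicity of volume on big divisors modulo their negative parts — i.e., $\vol$ is strictly increasing in the "positive directions" transverse to $N_\sigma$. I expect the proof to invoke Nakayama's results on the behaviour of $N_\sigma$ and the continuity/differentiability of $\vol$ on the big cone, reducing the perfect-field case to the algebraically closed one by a base-change argument (here normality and properness are preserved since the field is perfect).

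Finally, for the nef addendum: if $D$ is $\mathbb{R}$-Cartier and nef then $D$ is "movable to the boundary" in the sense that $N_\sigma(D) = 0$ — a nef divisor has empty augmented base locus in codimension one, hence trivial negative part. Then $ii)$ reads $E \le 0$, and since $E$ is effective this gives $E = 0$, which is $v)$; conversely $v) \Rightarrow i)$ is trivial. The only thing to check is $N_\sigma(D) = 0$ for nef $\mathbb{R}$-Cartier $D$, which follows because for any prime divisor $\Gamma$ and any small $\epsilon > 0$, $D - \epsilon\Gamma$ is still big (as $D$ is nef and big — note $D$ big is part of the hypothesis), forcing $\sigma_\Gamma(D) = 0$.
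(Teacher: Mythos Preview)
Your overall architecture matches the paper's: the cycle $ii)\Rightarrow iii)\Rightarrow iv)\Rightarrow i)$ with $i)\Rightarrow ii)$ as the substantive step, plus the nef clause via $N_\sigma(D)=0$. Your $ii)\Rightarrow iii)$ sketch is correct and is essentially what the paper packages as Lemma~\ref{lem:negsections2}.

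The gap is in $i)\Rightarrow ii)$. Both of your suggested routes are circular. In the contrapositive version you assert that if $\Gamma\not\leq N_\sigma(D)$ then subtracting $\Gamma$ strictly decreases volume; but ``volume strictly drops once you pass $N_\sigma$'' is precisely the statement $i)\Rightarrow ii)$ specialized to $E=t\Gamma$. Likewise, ``strict monotonicity of $\vol$ in directions transverse to $N_\sigma$'' restates the conclusion. Invoking the derivative of volume is the right instinct, but knowing that the derivative vanishes (i.e.\ $\langle P^{n-1}\rangle\cdot N=0$ where $P=D-E$) does not by itself give $N\leq N_\sigma(D)$: you need a further mechanism to convert the intersection-theoretic vanishing into a support containment.

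The paper's proof of $i)\Rightarrow ii)$ (Proposition~\ref{prop:Zunique2}) supplies exactly that missing mechanism. After reducing to $X$ smooth projective via de~Jong alterations (needed because the differentiability result of \cite{bfj09}/\cite{cutkosky13} lives there), one gets $\langle P^{n-1}\rangle\cdot N=0$. The key non-obvious step is to deduce from this that $\Supp(N)\subseteq\Supp\bigl(N_\sigma(P-\epsilon A)\bigr)$ for small ample $A$ and $\epsilon>0$: if some component $N_i$ of $N$ were missing from that support, one could write $P\geq_{N_i}\tfrac{\epsilon}{2}A$ and obtain $\langle P^{n-1}\rangle\cdot N\geq \tfrac{\epsilon^{n-1}}{2^{n-1}}A^{n-1}\cdot N>0$. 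Once the support containment is in hand, Lemma~\ref{lem:negsections2} gives $N_\sigma(P-\epsilon A+N)=N_\sigma(P-\epsilon A)+N$, and letting $\epsilon\to 0$ yields $N_\sigma(D)=N_\sigma(P)+N\geq N$. This support-and-additivity maneuver is the idea your proposal is missing.

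A minor correction on the nef clause: the claim ``$D-\epsilon\Gamma$ is still big, forcing $\sigma_\Gamma(D)=0$'' is false as stated (bigness of $D-\epsilon\Gamma$ says nothing about $\sigma_\Gamma(D)$). The correct argument, which you also allude to earlier, is that $D+\epsilon A$ is ample for any ample $A$ and small $\epsilon>0$, hence $\sigma_\Gamma(D+\epsilon A)=0$, and one lets $\epsilon\to 0$ using continuity.
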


\begin{thrmb} Let  $X$ be a proper, normal algebraic variety over a perfect field, $D$  a big $\mathbb R$-divisor on $X$ and $E$ an effective $\mathbb R$-divisor on $X$. Then the following are equivalent.
\begin{enumerate}[i)]
\item $\vol(D+E)=\vol(D)$.
\item $\Supp(E)\subseteq\bdp(D)$.
\item $h^0(mD'+rE)=h^0(mD')$ for every $D' \sim_{\mathbb{R}} D$ and $m, r>0$.
\item $h^0(mD+mE)=h^0(mD)$ for all $m>0$.
\end{enumerate}
Furthermore, if  $D$ is  $\mathbb R$-Cartier and nef then these are also equivalent to 
\begin{enumerate}[ v)]
\item $D^{n-1}\cdot E=0$.
\end{enumerate}
\end{thrmb}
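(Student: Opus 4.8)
The plan is to bootstrap Theorem~B from Theorem~A. Since $E$ is effective and $D$ is big, $(\ast\ast)$ gives $\vol(D+E)\ge\vol(D)>0$, so $D_{1}:=D+E$ is again big and $D_{1}-E=D\sim_{\mathbb R}D$. Applying Theorem~A to the pair $(D_{1},E)$ therefore yields
\[
\vol(D+E)=\vol(D)\iff E\le N_{\sigma}(D+E)\iff h^{0}(mD+mE)=h^{0}(mD)\ \text{for all }m>0,
\]
so (i)$\iff$(iv), both being equivalent to the auxiliary condition $E\le N_{\sigma}(D+E)$. The implication (iii)$\Rightarrow$(iv) is trivial (put $D'=D$, $r=m$), and (iv)$\Rightarrow$(i) follows by dividing $h^{0}(m(D+E))=h^{0}(mD)$ by $m^{n}/n!$ and letting $m\to\infty$. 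It thus remains to prove (i)$\iff$(ii) and (ii)$\Rightarrow$(iii); together with the above this closes the loop (ii)$\Rightarrow$(iii)$\Rightarrow$(iv)$\Rightarrow$(i)$\Rightarrow$(ii).

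For (i)$\iff$(ii) the key is the dictionary $E\le N_{\sigma}(D+E)\iff\Supp E\subseteq\bdp(D)$. By \eqref{aug.bs.defn} a prime divisor $\Gamma$ lies in $\bdp(D)$ precisely when $\mathbf{B}_{+}(D)=\mathbf{B}_{-}(D-\epsilon A)$ for a small ample Cartier divisor $A$, equivalently $\sigma_{\Gamma}(D-\epsilon A)>0$. For "$\Leftarrow$" one treats one component $\Gamma$ of $E$ at a time (absorbing the other components of $E$ into $D$, which keeps $\Gamma$ in the augmented base locus), and combines this characterization with the $\sigma$-decomposition of $D-\epsilon A+e_{\Gamma}\Gamma$ and the theory of restricted volumes to obtain $\sigma_{\Gamma}(D+E)\ge e_{\Gamma}$; summing over components gives $E\le N_{\sigma}(D+E)$. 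For "$\Rightarrow$", suppose some component $\Gamma_{0}$ of $E$, with coefficient $e_{0}>0$, is not contained in $\bdp(D)$; then $\Gamma_{0}\not\subseteq\mathbf{B}_{+}(D)$, so — after reducing to an algebraically closed ground field, which, $K$ being perfect, changes neither $h^{0}$, nor $\vol$, nor the base loci — the restricted volume $\vol_{X\mid\Gamma_{0}}(D)$ is positive, and since $\Gamma_{0}\not\subseteq\mathbf{B}_{+}(\,\cdot\,)$ is an open condition it stays positive after adding a small multiple of $\Gamma_{0}$, whence $\vol(D+t\Gamma_{0})>\vol(D)$ for $t>0$. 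With $(\ast\ast)$ and $E\ge e_{0}\Gamma_{0}$ this forces $\vol(D+E)>\vol(D)$, contradicting (i). Hence (i)$\iff$(ii).

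Next, (ii)$\Rightarrow$(iii). Fix $D'\sim_{\mathbb R}D$ and $m,r>0$; only $h^{0}(\lfloor mD'+rE\rfloor)\le h^{0}(\lfloor mD'\rfloor)$ needs proof. Since $\mathbf{B}_{+}$ depends only on the numerical class, $\Supp E\subseteq\bdp(D)=\bdp(mD')$; and since $\mathbf{B}_{+}(B+F)\supseteq\mathbf{B}_{+}(B)$ whenever $F\ge0$ is supported on $\mathbf{B}_{+}(B)$, an induction on the number of components of $E$ reduces to the case $E=\Gamma$ a single prime divisor contained in $\bdp(mD')$. Fix a decomposition $mD'\sim_{\mathbb R}A_{0}+N$ with $A_{0}$ ample Cartier and $N\ge0$ supported on $\bdp(mD')$, so $\Gamma\subseteq\Supp N$. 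Given an effective divisor $G$ in the class $\lfloor mD'+r\Gamma\rfloor$: if $G$ did not contain the entire $\Gamma$-part of that divisor, then subtracting the multiple of $\Gamma$ it does contain would produce an effective divisor $\mathbb R$-linearly equivalent to $mD'+t\Gamma$ for some $t>0$, up to a divisor with coefficients in $[0,1)$, which avoids $\Gamma$; combining this with the base-point-freeness of $A_{0}$ and rescaling contradicts $\Gamma\subseteq\mathbf{B}_{+}(mD')$. Hence $G$ contains the full $\Gamma$-part, so $h^{0}(\lfloor mD'+r\Gamma\rfloor)=h^{0}(\lfloor mD'\rfloor)$, which is (iii).

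Finally, the nef case: assume in addition that $D$ is $\mathbb R$-Cartier and nef. Reducing again to an algebraically closed ground field, Nakamaye's theorem for nef and big $\mathbb R$-divisors identifies $\mathbf{B}_{+}(D)$ with the null locus $\bigcup\{Z\subseteq X:D^{\dim Z}\cdot Z=0\}$, so the prime components of $\bdp(D)$ are exactly the prime divisors $\Gamma$ with $D^{n-1}\cdot\Gamma=0$. As $D$ is nef, $D^{n-1}\cdot\Gamma\ge0$ for every prime $\Gamma$, so writing $E=\sum_{i}e_{i}\Gamma_{i}$ with $e_{i}>0$ we have $D^{n-1}\cdot E=\sum_{i}e_{i}\,D^{n-1}\cdot\Gamma_{i}$, a sum of non-negative terms; hence $D^{n-1}\cdot E=0$ if and only if $D^{n-1}\cdot\Gamma_{i}=0$ for all $i$, i.e.\ if and only if $\Supp E\subseteq\bdp(D)$. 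Thus (v)$\iff$(ii), which finishes the proof. The main obstacle is the "$\Leftarrow$" half of the dictionary in the second paragraph: converting the closed condition $E\le N_{\sigma}(D+E)$ into membership in the \emph{augmented} base locus, which is a limit condition; this is exactly where the theory of restricted volumes, together with the identity $\mathbf{B}_{+}(D)=\mathbf{B}_{-}(D-\epsilon A)$, is needed — everything else reduces to Theorem~A and routine manipulations with $\sigma$-decompositions and base loci.
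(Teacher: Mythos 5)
The reduction (i)$\Leftrightarrow$(iv) via Theorem A applied to the pair $(D+E,E)$ is correct and is essentially what the paper does. The genuine problems are in the two implications that carry the real weight. For (ii)$\Rightarrow$(iii): first, the decomposition $mD'\sim_{\mathbb R}A_0+N$ with $A_0$ ample Cartier and $N\geq 0$ supported on $\bdp(mD')$ need not exist --- $X$ is only proper, so it may carry no ample divisor at all (this is exactly why the paper works with divisors ample in codimension $1$), and even on a projective $X$ the augmented base locus can have non-divisorial components, which lie in $\Supp N$ for \emph{every} decomposition, so $\Supp N$ cannot be forced inside the divisorial part. Second, and more seriously, your punchline --- that an effective divisor $\mathbb R$-linearly equivalent to $mD'+t\Gamma$ ($t>0$) avoiding $\Gamma$ ``contradicts $\Gamma\subseteq\mathbf{B}_{+}(mD')$ by base-point-freeness of $A_0$ and rescaling'' --- is not a formal consequence of the definition of the augmented base locus: membership of $\Gamma$ in $\mathbf{B}_{+}(D)$ says nothing direct about sections of $D+t\Gamma$. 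Ruling out such a divisor is precisely the hard content of the theorem (consider $D$ nef and big with $D^{n-1}\cdot\Gamma=0$: then $N_{\sigma}(D)=0$ and effective divisors in $|mD|_{\mathbb R}$ avoiding $\Gamma$ exist, yet the claim is that every member of $|mD+r\Gamma|_{\mathbb R}$ contains $\Gamma$); in the paper it is obtained by routing through (i)$\Rightarrow$(iii), i.e.\ Proposition \ref{prop:Zunique2} applied to $D+E$ (whose proof uses positive products and the differentiability of the volume after reducing to a smooth projective $\mathbb R$-Cartier situation) together with Lemma \ref{lem:negsections2}, or, in the nef case, through Luo's theorem. Your (i)$\Rightarrow$(ii) and the ``$\Leftarrow$'' half of your dictionary suffer from the same defect in a different guise: restricted volumes and the openness of ``$\Gamma\not\subseteq\mathbf{B}_{+}$'' under perturbation are invoked for $E$, $\Gamma$ that are merely $\mathbb R$-Weil divisors on a proper (possibly non-projective) variety in arbitrary characteristic, where that theory is not available as stated; making it available is exactly the role of Corollary \ref{cor:makingcartier}, Lemma \ref{lem:wlift} and the alteration step, which your argument needs but never performs. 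The inexpensive substitutes used in the paper are Lemma \ref{lem:bdpzariski}, Lemma \ref{lem:negsections2} with continuity of $\sigma$ for (ii)$\Rightarrow$(i), and Lemma \ref{concretesigma} for (iii)$\Rightarrow$(ii).

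For the nef case, your route (v)$\Leftrightarrow$(ii) via Nakamaye's theorem is genuinely different from the paper's (which proves (i)$\Rightarrow$(v) with positive intersection products on a projective birational model and (v)$\Rightarrow$(iv) with Luo's theorem), and it could in principle work; but as written there is a gap: the implication (ii)$\Rightarrow$(v) needs the hard inclusion $\mathbf{B}_{+}(D)\subseteq\mathrm{Null}(D)$, which is a theorem about \emph{projective} varieties (and for $\mathbb R$-divisors in positive characteristic requires Cascini--McKernan--Musta\c{t}\u{a} and Birkar), whereas here $X$ is only proper and $\bdp$ is defined through divisors ample in codimension $1$. You would have to pass to a projective model and prove compatibility of $\bdp$, $h^0$ and volumes with that passage (the content of Lemmas \ref{lem:wlift}, \ref{lem:bdpzariski}, \ref{lem:nspush}), none of which is addressed. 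So the skeleton is reasonable, but (ii)$\Rightarrow$(iii), (i)$\Rightarrow$(ii) and the nef case are not actually proved.
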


Special cases of these theorems were first conjectured in connection with
the numerical stability criteria for familes of canonical models
of varieties of general type  \cite{k-modbook}. In trying to prove these,
we gradually realized that the above results hold and the
general setting led to  shorter proofs.

The theorems are proved in Section \ref{mainresultsec} but the
necessary technical background results involving $\mathbb R$-divisors, 
 the Zariski--Nakayama-decomposition 
and of the  augmented base locus
on singular varieties are left to 
Sections \ref{weildivsec} through \ref{s:bdp}.
Much of the relevant literature works with smooth projective
varieties over $\mathbb C$ but many of these proofs apply
in more general settings. We went through them and we state clearly which parts work for normal varieties in any characteristic. We also establish several results that show how to reduce similar types of questions
to smooth and projective varieties. These should be useful in similar contexts.

\begin{ack} We thank R.~Lazarsfeld and S.~Pal for
comments, discussions and references. 
Partial financial support  to JK  was provided  by  the NSF under grant number
 DMS-1362960.
\end{ack}

\section{Proofs of the Theorems} \label{mainresultsec}

\begin{prop}\label{prop:Zunique2}Let $X$ be a normal proper variety over an algebraically closed field and $D$ a big $\mathbb R$-divisor.  Suppose that $D=P+N$ with $\vol(P)=\vol(D)$ and $N$ effective.  Then $N \leq N_{\sigma}(D)$.
\end{prop}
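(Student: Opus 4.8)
The plan is to show that $\mult_\Gamma(N)\le\sigma_\Gamma(D)$ for every prime divisor $\Gamma$ on $X$, which is precisely the inequality $N\le N_\sigma(D)$. Since $D$ is big, \eqref{defnegativepart} computes $\sigma_\Gamma(D)$ as $\inf\{\mult_\Gamma(\Delta)\mid\Delta\ge 0,\ \Delta\sim_{\mathbb R}D\}$, so the target is equivalent to the statement that $N\le\Delta$ for \emph{every} effective $\mathbb R$-divisor $\Delta\sim_{\mathbb R}D$. I will use two inputs from the preparatory sections (see also \cite{nakayama04}): first, that $\vol$ is invariant under $\mathbb R$-linear equivalence, including for divisors that are not effective; and second, the codimension-one comparison between the $\sigma$-decomposition and the volume, namely that for a big $\mathbb R$-divisor $B$ and a prime divisor $\Gamma$ the equality $\vol(B-c\Gamma)=\vol(B)$ holds for a given $c>0$ if and only if $c\le\sigma_\Gamma(B)$.

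So fix an effective $\Delta\sim_{\mathbb R}D$ and suppose, for contradiction, that $N\not\le\Delta$. Let $M:=\min(N,\Delta)$ be the componentwise minimum, and set $N':=N-M$ and $\Delta':=\Delta-M$. These are effective, $N'\neq 0$ because $N\not\le\Delta$, the supports of $N'$ and $\Delta'$ are disjoint, and $\Delta'-N'=\Delta-N$. Since $\Delta-N\sim_{\mathbb R}D-N=P$, invariance of the volume gives $\vol(\Delta'-N')=\vol(P)=\vol(D)$. Using in addition $\Delta'\le\Delta\sim_{\mathbb R}D$ and $\Delta'-N'\le\Delta'$, monotonicity of $\vol$ forces
$$\vol(D)=\vol(\Delta'-N')\le\vol(\Delta')\le\vol(\Delta)=\vol(D),$$
hence $\vol(\Delta')=\vol(\Delta'-N')=\vol(D)>0$; in particular $\Delta'$ is big.

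Now pick a prime component $\Gamma$ of $N'$ and let $c>0$ be its coefficient. From $\Delta'-N'\le\Delta'-c\Gamma\le\Delta'$ together with the equality of the two outer volumes we obtain $\vol(\Delta'-c\Gamma)=\vol(\Delta')$. But $\Gamma$ is not a component of $\Delta'$, so $\Delta'$ is itself an effective representative of its own $\mathbb R$-linear class with $\mult_\Gamma(\Delta')=0$, whence $\sigma_\Gamma(\Delta')=0$. Applying the volume comparison recalled above to $B=\Delta'$, the equality $\vol(\Delta'-c\Gamma)=\vol(\Delta')$ would force $c\le\sigma_\Gamma(\Delta')=0$, contradicting $c>0$. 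Therefore $N\le\Delta$ for every effective $\Delta\sim_{\mathbb R}D$, which yields $N\le N_\sigma(D)$.

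I expect the only real difficulty to be the volume comparison used at the end: that removing from a big divisor $B$ any prime divisor $\Gamma$ with $\sigma_\Gamma(B)=0$ strictly decreases $\vol(B)$. This is where genuine geometric input is required, presumably through the results on volumes of restricted linear systems and Fujita-type approximation together with a reduction to the smooth projective case, all of which are developed in the later sections; the rest of the argument is bookkeeping with componentwise minima and the monotonicity of the volume.
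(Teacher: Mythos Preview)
Your reduction is logically valid, but it is circular relative to this paper. The ``volume comparison'' you invoke as a black box --- that for a big $B$ and a prime $\Gamma$, $\vol(B-c\Gamma)=\vol(B)$ forces $c\le\sigma_\Gamma(B)$ --- is not established anywhere in the preparatory Sections~\ref{weildivsec}--\ref{s:bdp}; it is exactly the special case $N=c\Gamma$ of Proposition~\ref{prop:Zunique2} itself. The later sections develop the $\sigma$-decomposition and the divisorial augmented base locus, but the implication ``equal volume $\Rightarrow$ bounded by $N_\sigma$'' is precisely what is being proved here, and the paper does not supply the single-prime case separately. So your componentwise-minimum trick correctly reduces the general statement to this special case, but the special case is not easier: it still needs the genuine analytic input that the paper brings in, namely the differentiability of the volume function via the positive intersection product (\cite[Thm.~5.6]{cutkosky13}, following \cite{bfj09}), which yields $\langle P^{n-1}\rangle\cdot N=0$ and then $\Supp(N)\subseteq\Supp(N_\sigma(P-\epsilon A))$.

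In short, the bookkeeping is fine, and your last paragraph correctly identifies where the real content lies; but that content is not available from the later sections as you suggest, and once you have it, the paper's direct argument handles general $N$ with no extra work, so the reduction buys nothing. To turn this into a complete proof you would have to supply the ``$\sigma_\Gamma(\Delta')=0\Rightarrow\vol(\Delta'-c\Gamma)<\vol(\Delta')$'' step yourself, and that step is the whole proposition.
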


The proof is a modification of \cite[Prop.5.3]{fl13z}.

\begin{proof}
By Corollary \ref{cor:makingcartier}, we may find a projective birational model $X'$ and $\mathbb R$-Cartier $\mathbb R$-divisors $D'$ and $P'$ on $X'$ such that for any positive real $m$ the pushforward of $\mathcal{O}_{X'}(mD')$ and $\mathcal{O}_{X'}(mP')$ are respectively $\mathcal{O}_{X}(mD)$ and $\mathcal{O}_{X}(mP)$, and the difference $D'-P'$ is effective.  Note that $D'$ and $P'$ still satisfy the hypotheses of the theorem.  If we prove the statement on $X'$, we can conclude the statement on $X$ by pushing forward and applying  Lemma \ref{lem:stricttransformgamma}.  So without loss of generality we may assume that $P$ and $D$, and hence $N$, are $\mathbb R$-Cartier $\mathbb R$-divisors and that $X$ is projective.


If $\pi:Y\to X$ is a generically finite proper morphism from a normal projective variety $Y$, then 
$$\pi_*N_{\sigma}(\pi^*D) = (\deg\pi)\cdot N_{\sigma}(D)$$
by Lemma \ref{lem:nspush}.$ii)$. Furthermore $$\vol(\pi^*D)=(\deg\pi)\cdot\vol(D)$$ by Theorem \ref{volprop}.$ii)$, the homogeneity of $\vol$, and \cite[Prop.2.9.(1)]{kur06} (the proof there does not use the assumption that the characteristic is zero). Therefore after passing to a nonsingular alteration (cf. \cite{dejong96}), it is enough to consider the case when $X$ is nonsingular and projective.

By assumption the volume of $P$ does not change if we add a small multiple of $N$.  Thus by \cite[Theorem 5.6]{cutkosky13} (see also \cite[Thm.A]{bfj09} and \cite[Cor.C]{lm09}), 
$$\langle P^{n-1} \rangle \cdot N=0,$$
where $\langle P^{n-1}\rangle$ is the positive intersection product defined in \cite{cutkosky13}, inspired by \cite{bfj09} and classical work of Matsusaka (\cite[p.1031]{Matsusaka72}; see also  \cite[p.515]{matsusaka75})

As in the proof of \cite[Thm.4.9]{bfj09}, it follows that for any ample $\mathbb R$-Cartier $\mathbb R$-divisor $A$ on $X$ and any small $\epsilon > 0$, we have 
$$\Supp(N) \subseteq \Supp\bigl(N_{\sigma}(P-\epsilon A)\bigr).$$
(Otherwise from $P=\frac{\epsilon}2A+(\frac{\epsilon}2A+P_{\sigma}(P-\epsilon A))+N_{\sigma}(P-\epsilon A)$ we get $P\geq_{N_i}\frac{\epsilon}2A$ for any component $N_i$ of $N$, i.e. $P-\frac{\epsilon}2A$ is numerically equivalent to an effective $\mathbb R$-divisor that does not contain $N_i$ in its support. Using \cite[Rem.4.5]{bfj09}, we see that $\frac{\epsilon^{n-1}}{2^{n-1}}A^{n-1}\cdot N\leq \langle P^{n-1}\rangle|_N\leq\langle P^{n-1}\rangle\cdot N$, but the LHS is only zero when $N=0$.)

In particular, Lemma \ref{lem:negsections2} shows that $N_{\sigma}(P - \epsilon A + N) = N_{\sigma}(P - \epsilon A) + N$.  Letting $\epsilon$ tend to $0$ and using continuity of $\sigma$ as in Lemma \ref{lem:sigmaproperties}.$iv)$, we see that $N_{\sigma}(D) = N_{\sigma}(P) + N$.
\end{proof}

We reduce our main theorems to the case
where the base field is algebraically closed.

\begin{rmk}\label{rmk:closedperfect}
Let  $K$ be a field 
and $L/K$ a separable field extension. Base change to $L$ is denoted by
the subscript $L$. 
If $X_K$ is a proper, normal algebraic variety over $K$ then
$X_L$ is a disjoint union of  proper, normal algebraic varieties over $L$.
If $E_K\subset X_K$ is a prime divisor then $E_L\subset X_L$ is a 
sum of prime divisors, each appearing with coefficient 1. 
Thus if $D_K$ is an $\mathbb R$-divisor on $X_K$ then
$\lfloor D_K\rfloor_L=\lfloor D_L\rfloor$. Thus
$$
\bigl(\mathcal O_{X_K}(D_K)\bigr)_L=\mathcal O_{X_L}(D_L)
\quad\mbox{and}\quad
h^0(D_K)=h^0(D_L).
\eqno{(\ref{rmk:closedperfect}.1)}
$$
Similarly, if $D_K$ is a $\mathbb Z$-divisor then
$|D_K|_L=|D_L|$ and hence the base locus commutes with separable field 
extensions. Using the characterization given in Lemma \ref{lem:sigmaproperties}.i)
and  Lemma \ref{lem:bdpzariski} 
 this implies
that 
$$
N_{\sigma}(D_L)=\bigl(N_{\sigma}(D_K)\bigr)_L\quad\mbox{and}\quad
\bdp(D_L)=\bigl(\bdp(D_K)\bigr)_L.
\eqno{(\ref{rmk:closedperfect}.2)}
$$
(If $X_K$ is geometrically normal but $L/K$ is not separable then
it can happen that $\lfloor D_K\rfloor_L\neq \lfloor D_L\rfloor$.
However (\ref{rmk:closedperfect}.2) still holds.)

If Theorems A and B hold for proper, normal varieties
over an algebraically closed field then they clearly also hold
for proper, normal, equidimensional schemes
over an algebraically closed field. Thus, by the above considerations,
 they hold for proper, normal varieties
over any perfect field.
\end{rmk}

\subsection*{Proof of Theorem A}
By Remark \ref{rmk:closedperfect} we may work over an algebraically closed
field.
The implications $ii)\to iii)\to iv) \to i)$ are immediate, while $i) \to ii)$ is Proposition \ref{prop:Zunique2}. Any nef $\mathbb R$-Cartier $\mathbb R$-divisor $D$ is movable,
i.e.~$N_{\sigma}(D)=0$. Then the equivalence between $ii)$ and $v)$ is clear.
\qed


\begin{rmk}The work of \cite{KL15} hints to an approach to Theorem A
using the theory of Okounkov bodies.
\end{rmk}

\begin{rmk} Related cases of Theorem A include:
\begin{enumerate}[$i)$]
\item If $D$ is an $\mathbb R$-Cartier $\mathbb R$-divisor, then in $iii)$ we may set $D'$ to be any $\mathbb R$-Cartier $\mathbb{R}$-divisor numerically equivalent to $D$.
\item If $X$ is nonsingular and projective over an algebraically closed field, if $D$ is big and \emph{movable}, and $E$ is \emph{pseudoeffective} (i.e. its numerical class is in the closure of the effective cone), then $\vol(D-E)=\vol(D)$ if and only if $E=0$.
\end{enumerate}
The first statement is a consequence of Lemma \ref{lem:sigmaproperties}.$iv)$. 
For the second, by \cite[Prop.5.3]{fl13z} we get 
$$P_{\sigma}(D-E)+(N_{\sigma}(D-E)+E)\equiv D=P_{\sigma}(D)\equiv P_{\sigma}(D-E).$$
Consequently $N_{\sigma}(D-E)+E\equiv 0$. Since the pseudoeffective cone is pointed (e.g.\ by \cite[Lem.2.4]{chms13}), it follows that $E=0$.\qed
\end{rmk}

\subsection*{Proof of Theorem B}
As in Theorem A, we may work over an algebraically closed field.
The implications $iii)\to iv)\to i)$ are immediate. Part $ii)$ of Theorem A and Lemma \ref{lem:negsections2} 
prove $i)\to iii)$.

Assume ${\rm Supp}(E)\subseteq\bdp(D)$. 
Let $A$ be ample in codimension 1 (cf. Definition \ref{def:amplecod1}). 
By Lemma \ref{lem:bdpzariski} and Lemma \ref{lem:bdpalternate}, 
we have 
${\rm Supp}(E)\subseteq{\rm Supp}(N_{\sigma}(D-\epsilon A))$ 
for arbitrarily small $\epsilon>0$. 
By Lemma \ref{lem:negsections2}, we see that 
$\vol(D+E-\epsilon A)=\vol(D-\epsilon A)$ for sufficiently small $\epsilon > 0$.
If $D$, $E$ and $A$ are $\mathbb{R}$-Cartier, we can conclude $\vol(D+E) = \vol(D)$ by the continuity of volumes for $\mathbb R$-Cartier $\mathbb R$-divisors.  To show that $\vol(D+E)=\vol(D)$ in general, we reduce to the $\mathbb{R}$-Cartier case by applying Theorem \ref{volprop}.$ii)$ and Corollary \ref{cor:makingcartier}.  Hence $ii)\to i)$.

Let $F$ be an irreducible component of $E$ and assume 
$F\not\subset{\rm Supp}(N_{\sigma}(D-\epsilon A))$. Then by Lemma \ref{concretesigma} there exists $m>0$ such that
$$mD+F=\bigl(\tfrac12{m\epsilon}A+F\bigr)+\bigl(\tfrac12{m\epsilon}A+
mP_{\sigma}(D-\epsilon A)\bigr)+mN_{\sigma}(D-\epsilon A)
$$
is $\mathbb R$-linearly equivalent to an effective divisor 
that does not contain $F$ in its support.
In particular $h^0(mD'+rE)\geq h^0(mD'+F)>h^0(mD')$ for some $D'\sim_{\mathbb R}D$ and some $r>0$, e.g. $r=\frac 1{{\rm mult}_F(E)}$.
Therefore $iii)\to ii)$.

Suppose now that $D$ is a big and nef $\mathbb R$-Cartier $\mathbb R$-divisor. 
Let $\pi:Y\to X$ be a proper birational morphism with $Y$ projective. By Lemma \ref{lem:wlift} there exists an effective 
$\pi$-exceptional divisor $F$ on $Y$ such that $\vol(D+E)=\vol(\pi^*D+\overline E+F)$, where $\overline E$ is a divisor with $\pi_*\overline E=E$. We can make choices such that $\overline E$ and $F$ are $\mathbb R$-Cartier $\mathbb R$-divisors. Of course 
$\vol(D)=\vol(\pi^*D)$.

If $\vol(D+E)=\vol(D)$, then $\vol(\pi^*D+\overline E +F)=\vol(\pi^*D)$. By \cite[Theorem 5.6]{cutkosky13}, we get $\langle \pi^*D^{n-1}\rangle\cdot(\overline E+F)=0$. Since $D$ is nef, we have $(\pi^*D)^{n-1}=\langle (\pi^*D)^{n-1}\rangle$ from \cite[Proposition 4.11]{cutkosky13}. By the projection formula $D^{n-1}\cdot E=0$. 

Conversely, if $D^{n-1}\cdot E=\pi^*D^{n-1}\cdot (\overline E+F)=0$, then \cite{luo90} shows that $h^0(\pi^*D+\overline E+F)=h^0(\pi^*D)$ (the analogous equality also holds for multiples). The proof there is carried out with $\mathbb Z$-coefficients and over base fields of characteristic zero, but extends to $\mathbb R$-coefficients over arbitrary algebraically closed base fields. We conclude by pushing forward to $X$.
\qed

\begin{rmk}As in the previous theorem, if $D$ is an $\mathbb R$-Cartier $\mathbb R$-divisor, then in $iii)$ we may set $D'$ to be any $\mathbb R$-Cartier $\mathbb R$-divisor numerically equvialent to $D$.  In fact even in the $\mathbb R$-Weil case we may replace $D'\sim_{\mathbb R}D$ with $D'-D$ being a numerically trivial $\mathbb R$-\emph{Cartier} $\mathbb R$-divisor (cf. Lemma \ref{lem:sigmaproperties}.$iv)$).
\end{rmk}

As mentioned in the introduction, if $D'\sim_{\mathbb R}D$, there is no
clear connection between the Hilbert functions $\mathcal H(X,D)$ and
$\mathcal H(X,D')$ other than that $\vol(D)=\vol(D')$ (cf. Theorem \ref{volprop}.$iv)$):

\begin{exmple}\label{Hf.lineq.exmp}
Let $S\to \mathbb P^1$ be a minimal ruled surface with a negative section
$E\subset S$ and a positive section  $C\subset S$ that is disjoint from
$E$. Let $F_1,\dots, F_4$ be distinct fibers.
Then $$C\sim_{\mathbb R} C+ (F_1-F_2)+\sqrt{2}(F_3-F_4).$$
Note that
$\lfloor mC+ m(F_1-F_2)+m\sqrt{2}(F_3-F_4)\rfloor  $
has negative intersection with $E$ for all real $m>0$. This implies that
$$
h^0\bigl(S,\mathcal O_S( mC+ m(F_1-F_2)+m\sqrt{2}(F_3-F_4))\bigr)<
h^0\bigl(S,\mathcal O_S( mC)\bigr)
$$
for every $m>0$.\qed
\end{exmple}

\section{Weil divisors} \label{weildivsec}
Let $X$ be a normal variety over a field. 
The basics of the theory of Weil $\mathbb R$-divisors  can be found
in \cite{schwede}.
An  {\it $\mathbb R$-divisor} 
(also called Weil $\mathbb R$-divisor or
$\mathbb R$-Weil $\mathbb R$-divisor)
is an $\mathbb R$-linear combination of  prime divisors. $D$  is \emph{effective}, denoted $D\geq 0$, if it is a nonnegative combination of prime divisors on $X$. If $D\geq E$, i.e. $D-E\geq 0$, we say that $D$ \emph{dominates} $E$. For an  $\mathbb R$-divisor $D$, the rule 
$$
U\mapsto H^0(U,D):=\bigl\{f\in K(X)^*\ |\ ({\rm div}(f)+D)|_U\geq 0\bigr\}\cup\bigl\{0\bigr\}
$$
defines a coherent sheaf $\mathcal O_X(D)$ on $X$. This coincides with the classical notation when $D$ is a  $\mathbb Z$-divisor. Note that $\mathcal O_X(D)=\mathcal O_X(\lfloor D\rfloor)$. If $D\geq 0$, then $\mathcal O_X(-D)$ is an ideal sheaf in $\mathcal O_X$. If $M$ is a Cartier $\mathbb Z$-divisor, then $\mathcal O_X(D+M)\simeq\mathcal O_X(D)\cdot\mathcal O_X(M)\simeq\mathcal O_X(D)\otimes\mathcal O_X(M)$ for any $\mathbb R$-divisor $D$.  

If $D$ and $D'$ are $\mathbb R$-divisors such that $D'-D={\rm div}(f)$ for some $f\in K(X)$, we say that $D$ and $D'$ are \emph{linearly equivalent} and denote this relation by  $D\sim D'$ or $D\sim_{\mathbb Z}D'$. Denote by $|D|$ the complete linear series $\{D'\ |\ D'\geq 0,\ D'\sim_{\mathbb Z}D\}$. It coincides with $|\lfloor D\rfloor|+\{D\}$, where $\{D\}$ denotes the fractional part of $D$. If $mD\sim mD'$ for some $m\in\mathbb Z^*$, we write $D\sim_{\mathbb Q} D'$. If $D'-D=\sum_{i=1}^ra_i{\rm div}(f_i)$ for some $r\in\mathbb N^*$, some $a_i\in\mathbb R$ and $f_i\in K(X)$, we write $D\sim_{\mathbb R}D'$.
Denote by $|D|_{\mathbb Q}$ and $|D|_{\mathbb R}$ the set of effective $\mathbb R$-divisors $D'$ that are $\mathbb Q$-linearly and respectively $\mathbb R$-linearly equivalent to $D$. If $D\sim D'$, then $H^0(X,D)\simeq H^0(X,D')$ and if $D\sim_{\mathbb Q}D'$, then $H^0(X,mD)\simeq H^0(X,mD')$ for sufficiently divisible $m$. 
However, no obvious connection seems to exists between $H^0(X,D)$ and $H^0(X,D')$ if $D\sim_{\mathbb R}D'$.

An $\mathbb R$-divisor $H$ is {\it ample} if 
$H=\sum_ia_i(H_i+{\rm div}(f_i))$, where $a_i\in\mathbb R_+$, where $f_i\in K(X)$, and $H_i$ are effective ample Cartier $\mathbb Z$-divisors.  Note that an ample $\mathbb{R}$-divisor is always $\mathbb{R}$-Cartier, and that this definition coincides with the classical one in \cite[\S 2]{lazarsfeld04}.

Two $\mathbb{R}$-Cartier $\mathbb{R}$-divisors are numerically equivalent if they have the same intersection against every proper curve in $X$.  

We review some of the basic theory of  $\mathbb R$-divisors.  Over $\mathbb C$, many of the results in this section appear in \cite[\S II]{nakayama04} or \cite{fujino09}.

\begin{lem}\label{lem:effCartier}Let $X$ be a normal variety and 
$D$ an effective $\mathbb R$-Cartier $\mathbb R$-divisor. 
Then $D$ is a positive $\mathbb R$-linear combination $\sum_ia_iD_i$ of effective Cartier divisors.
\end{lem}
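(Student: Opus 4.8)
The plan is to realize $D$ inside a finite-rank lattice of Cartier divisors and then apply the structure theory of rational polyhedral cones. By the definition of an $\mathbb R$-Cartier $\mathbb R$-divisor, write $D=\sum_{j=1}^{k}b_j M_j$ with $b_j\in\mathbb R$ and $M_j$ Cartier $\mathbb Z$-divisors, and let $E_1,\dots,E_s$ be the finitely many prime divisors occurring in $\Supp(M_1)\cup\dots\cup\Supp(M_k)$. The subgroup $\Lambda$ of the group of Weil divisors on $X$ generated by $M_1,\dots,M_k$ is contained in $\bigoplus_{t=1}^{s}\mathbb Z E_t$, hence is free of finite rank, and $D$ lies in $\Lambda_{\mathbb R}:=\Lambda\otimes_{\mathbb Z}\mathbb R$.

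Next I would identify the cone of effective elements of this lattice. For $G\in\Lambda_{\mathbb R}$ the condition $G\geq 0$ is precisely that $\mult_{E_t}(G)\geq 0$ for $t=1,\dots,s$. Each functional $\mult_{E_t}$ takes integer values on $\Lambda$ (because every $M_j$ is a $\mathbb Z$-divisor), so it is rational with respect to the lattice $\Lambda$. Therefore $C:=\{G\in\Lambda_{\mathbb R}\mid G\geq 0\}$ is a rational polyhedral cone in $\Lambda_{\mathbb R}$.

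By the Minkowski--Weyl theorem (equivalently Gordan's lemma), $C$ is generated as a cone by finitely many integral points $D_1,\dots,D_N\in\Lambda\cap C$. Being in $C$, each $D_i$ is an effective Weil divisor; being in $\Lambda$, it is Cartier; hence each $D_i$ is an effective Cartier divisor. Since $D$ is effective and lies in $\Lambda_{\mathbb R}$, we have $D\in C$, so $D=\sum_{i}a_i D_i$ with $a_i\geq 0$. Discarding the terms with $a_i=0$ yields the desired expression $D=\sum_i a_i D_i$ with all $a_i>0$ and all $D_i$ effective Cartier divisors (if $D=0$ there is nothing to prove).

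The argument is essentially formal, and I do not expect a serious obstacle; the one point that must be checked with care is the rationality of the inequalities cutting out $C$. This is where the hypothesis enters in an essential way: $D$ being $\mathbb R$-Cartier in the sense of being an $\mathbb R$-combination of \emph{integral} Cartier divisors is exactly what makes the functionals $\mult_{E_t}$ rational on $\Lambda$, and hence makes $C$ rational polyhedral. Finiteness of the $E_t$, freeness of $\Lambda$, and finite generation of rational polyhedral cones are all standard.
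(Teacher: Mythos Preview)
Your proof is correct and is essentially the same argument that the paper invokes: the paper simply cites \cite[Lem.~0.14]{fujino09} and notes that Fujino's argument is characteristic free, and Fujino's argument is precisely the rational-polyhedral-cone / Gordan's lemma computation you have written out. There is nothing to add.
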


\begin{proof}The argument in \cite[Lem.0.14]{fujino09} is characteristic free.
\end{proof}

\begin{lem}\label{lem:pushpullCartier}Let $\pi:Y\to X$ be a proper birational morphism of normal varieties, 
and  $D$  an $\mathbb R$-Cartier $\mathbb R$-divisor on $X$. 
Then $\pi_*\mathcal O_Y(\pi^*D+E)=\mathcal O_X(D)$ 
for any effective $\pi$-exceptional $\mathbb R$-divisor $E$.
\end{lem}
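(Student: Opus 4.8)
The plan is to compare the sections of the two sheaves directly, viewing both $\mathcal O_X(D)$ and $\pi_*\mathcal O_Y(\pi^*D+E)$ as subsheaves of the constant sheaf $\underline{K(X)}$. Since $\pi$ is birational, $K(Y)=K(X)$, so for an open $U\subseteq X$ we have
$$
H^0\bigl(\pi^{-1}(U),\pi^*D+E\bigr)=\bigl\{f\in K(X)^*\ :\ \operatorname{div}_Y(f)+\pi^*D+E\geq 0\ \text{on}\ \pi^{-1}(U)\bigr\}\cup\{0\},
$$
and I want to show this coincides with $H^0(U,D)$ for every $U$. The two containments are proved separately, and since they hold for all $U$ the sheaves will agree.

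For the inclusion $H^0(U,D)\subseteq H^0(\pi^{-1}(U),\pi^*D+E)$: if $f\in H^0(U,D)$, then $\operatorname{div}_X(f)|_U+D|_U$ is an effective $\mathbb R$-Cartier $\mathbb R$-divisor on $U$, hence by Lemma \ref{lem:effCartier} it is a positive $\mathbb R$-linear combination $\sum_i a_iD_i$ of effective Cartier divisors. Pulling back (pullback of $\mathbb R$-Cartier divisors is $\mathbb R$-linear and carries effective Cartier divisors to effective ones, and $\pi^*\operatorname{div}_X(f)=\operatorname{div}_Y(f)$ since $f\in K(X)^*=K(Y)^*$) gives $\operatorname{div}_Y(f)+\pi^*D=\sum_i a_i\pi^*D_i\geq 0$ on $\pi^{-1}(U)$, and adding the effective $E$ keeps the inequality, so $f$ lies in the right-hand side.

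For the reverse inclusion I use strict transforms. Every prime divisor $\Gamma$ on $X$ has a strict transform $\widetilde\Gamma$ on $Y$ — the closure of the preimage of $\Gamma$ under the isomorphism $\pi$ induces over a dense open of $X$ — which is a prime divisor dominating $\Gamma$ with $\mathcal O_{Y,\eta_{\widetilde\Gamma}}=\mathcal O_{X,\eta_\Gamma}$, so that $\operatorname{ord}_{\widetilde\Gamma}$ and $\operatorname{ord}_\Gamma$ agree on $K(X)$; writing $D$ near $\eta_\Gamma$ as a combination of principal divisors shows $\operatorname{ord}_{\widetilde\Gamma}(\pi^*D)=\operatorname{ord}_\Gamma(D)$, while $\operatorname{ord}_{\widetilde\Gamma}(E)=0$ because $E$ is $\pi$-exceptional and $\widetilde\Gamma$ has codimension-$1$ image. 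Thus, given $f$ in $H^0(\pi^{-1}(U),\pi^*D+E)$ and a prime $\Gamma$ meeting $U$, applying $\operatorname{ord}_{\widetilde\Gamma}$ to $\operatorname{div}_Y(f)+\pi^*D+E\geq 0$ yields $\operatorname{ord}_\Gamma(f)+\operatorname{ord}_\Gamma(D)\geq 0$; as this holds for all such $\Gamma$, $\operatorname{div}_X(f)|_U+D|_U\geq 0$, i.e. $f\in H^0(U,D)$.

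The main point — and the only place the $\mathbb R$-Cartier hypothesis on $D$ is used — is the first inclusion: it is precisely the assertion that the pullback of an effective $\mathbb R$-Cartier $\mathbb R$-divisor is effective on $Y$, which fails for general Weil divisors (pullbacks may acquire negative coefficients along $\pi$-exceptional divisors), and Lemma \ref{lem:effCartier} reduces it to the elementary Cartier case. The only other thing requiring care is the routine bookkeeping with strict transforms, where one invokes the valuative criterion of properness (or simply the isomorphism of $\pi$ over a dense open) to produce $\widetilde\Gamma$ and to identify $\operatorname{ord}_{\widetilde\Gamma}$ with $\operatorname{ord}_\Gamma$.
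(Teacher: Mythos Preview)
Your proof is correct and follows essentially the same approach as the paper's: both directions are argued by comparing sections over an open $U$, with Lemma~\ref{lem:effCartier} handling the inclusion $\mathcal O_X(D)\subseteq\pi_*\mathcal O_Y(\pi^*D+E)$ in both cases. For the reverse inclusion the paper simply invokes the projection formula $\pi_*(\operatorname{div}_Y(f)+\pi^*D+E)=\operatorname{div}_X(f)+D\geq 0$, whereas you unpack this explicitly via strict transforms and the identification $\operatorname{ord}_{\widetilde\Gamma}=\operatorname{ord}_\Gamma$; this is the same content, just written out by hand.
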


\begin{proof}The argument is similar to \cite[Lem.2.11]{nakayama04}. 
Let $U\subset X$ be open and $f\in K(X)^*$. 
By the projection formula \cite[Prop.2.3.(c)]{fulton84}, if 
${\rm div}_Y(f)+\pi^*D+E\geq 0$ over $\pi^{-1}U$, then 
${\rm div}_X(f)+D\geq 0$ over $U$. 
By Lemma \ref{lem:effCartier}, we see that if 
${\rm div}_X(f)+D\geq 0$ on $U$, then 
${\rm div}_Y(f)+\pi^*D\geq 0$ on $\pi^{-1}U$. 
In particular ${\rm div}_Y(f)+\pi^*D+E\geq 0$ on $\pi^{-1}U$.
\end{proof}


The following lemma can be used to reduce many questions
involving the shaves $\mathcal O_X(D)$ to 
normal {\it projective} varieties.

\begin{lem}\label{lem:wlift}Let $\pi:Y\to X$ be a proper birational morphism of normal varieties
and  $D_i$ a finite collection of  $\mathbb R$-divisors on $X$. 
Then there are $\mathbb R$-divisors $D_i^Y$ on $Y$ such that
 $\pi_*D_i^Y=D_i$ for every $i$ and
$$
\pi_*\mathcal O_Y\bigl(F+\pi^*M+\textstyle{\sum}_i m_i D_i^Y\bigr)=
\mathcal O_X\bigl(M+\textstyle{\sum}_i m_i D_i\bigr)
$$
for every  $m_i\in \mathbb R^+$, effective $\pi$-exceptional $\mathbb R$-divisor $F$ on $Y$, and $\mathbb R$-Cartier $\mathbb R$-divisor $M$ on $X$.
\end{lem}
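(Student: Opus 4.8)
The plan is to construct $D_i^Y$ by choosing an auxiliary ample divisor on $X$ and correcting the pullbacks by exceptional divisors, then reduce the pushforward identity to the known Cartier case via Lemma \ref{lem:pushpullCartier}. First I would fix an ample Cartier $\mathbb Z$-divisor $A$ on $X$ large enough that each $D_i + A$ is linearly equivalent to an effective $\mathbb R$-divisor avoiding the exceptional locus of $\pi$ in a controlled way; more precisely, since the question is local on $X$ and the $D_i$ are finite in number, I would arrange that both $A$ and $A - D_i$ (for each $i$) are ample, hence effective up to $\mathbb R$-linear equivalence. Writing $A_i' \sim_{\mathbb R} A - D_i$ effective, I would like to set $D_i^Y := \pi^{*}A - \pi_*^{-1}A_i'$ where $\pi_*^{-1}$ denotes the strict transform; then $\pi_* D_i^Y = A - A_i' \sim_{\mathbb R} D_i$, and after adjusting by the $\mathbb R$-linear equivalence we get a genuine $D_i^Y$ with $\pi_* D_i^Y = D_i$. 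The point of this construction is that $\pi^{*}D_i - D_i^Y$ is then $\pi$-exceptional and effective (it equals $\pi^*A_i' - \pi_*^{-1}A_i'$ plus exceptional correction terms), so $\sum_i m_i \pi^* D_i - \sum_i m_i D_i^Y$ is an effective $\pi$-exceptional $\mathbb R$-divisor for all $m_i \in \mathbb R^+$.

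Granting this, the pushforward identity follows formally: write $G := F + \sum_i m_i(\pi^* D_i - D_i^Y)$, which is an effective $\pi$-exceptional $\mathbb R$-divisor since $F$ is and each summand is. Then
$$
F + \pi^* M + \sum_i m_i D_i^Y = \pi^*\bigl(M + \textstyle\sum_i m_i D_i\bigr) + G,
$$
noting that $\pi^*(M + \sum_i m_i D_i) = \pi^* M + \sum_i m_i \pi^* D_i$ makes sense because $M$ is $\mathbb R$-Cartier and each $D_i$, while only Weil, still admits a pullback in the weak sense needed here once we have written things as $\pi^* M$ plus exceptional. Applying Lemma \ref{lem:pushpullCartier} to the $\mathbb R$-Cartier divisor $M + \sum_i m_i D_i$ (this is where $\mathbb R$-Cartierness of the whole combination after adding the $D_i$ is used — but in fact we only need the formula for the sheaf $\mathcal O_X$ side, so I would instead apply Lemma \ref{lem:pushpullCartier} with the ample $A$'s and combine) yields $\pi_* \mathcal O_Y(\pi^* N + G) = \mathcal O_X(N)$ for any $\mathbb R$-Cartier $N$, giving the claimed equality.

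The main obstacle is that $M + \sum_i m_i D_i$ need not be $\mathbb R$-Cartier, so Lemma \ref{lem:pushpullCartier} does not apply directly to it; the whole purpose of introducing the ample Cartier $A$ and the strict transforms is to express everything in terms of Cartier (or at least $\mathbb R$-Cartier) data plus exceptional corrections. Concretely, I expect the delicate bookkeeping to be: (1) verifying that the strict transform construction produces $D_i^Y$ with exactly $\pi_* D_i^Y = D_i$ on the nose rather than merely up to $\sim_{\mathbb R}$, which requires absorbing the $\mathbb R$-linear equivalence $A - D_i \sim_{\mathbb R} A_i'$ into a principal-divisor correction that is automatically supported away from nothing problematic; and (2) checking that $\pi^* D_i - D_i^Y$ is genuinely effective and exceptional, for which one uses that $\pi^* A_i' - \pi_*^{-1} A_i' \geq 0$ is exceptional (standard for strict transforms of effective divisors under $\pi^*$ of $\mathbb R$-Cartier divisors, via Lemma \ref{lem:pushpullCartier} or a valuative computation). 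Once these two points are settled the rest is the formal manipulation above, and the generality in $F$, $M$, and the $m_i$ comes for free since $G$ stays effective exceptional throughout.
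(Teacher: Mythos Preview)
Your approach has a genuine gap at its core. You construct $A_i'\sim_{\mathbb R}A-D_i$ effective and then want to compare $\pi^*A_i'$ with the strict transform $\pi_*^{-1}A_i'$, invoking the standard fact that $\pi^*A_i'-\pi_*^{-1}A_i'$ is effective and $\pi$-exceptional. But that fact requires $A_i'$ to be $\mathbb R$-Cartier, and $A_i'$ is $\mathbb R$-Cartier if and only if $D_i$ is (since $A$ is Cartier and $\mathbb R$-linear equivalence preserves $\mathbb R$-Cartierness). So the entire mechanism you describe---expressing $D_i^Y$ as $\pi^*D_i$ plus an effective exceptional correction and then reducing to Lemma~\ref{lem:pushpullCartier}---is only available in exactly the case where the lemma is already trivial. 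The symbol $\pi^*D_i$ simply has no meaning when $D_i$ is a non-$\mathbb R$-Cartier Weil divisor, and your acknowledgement that ``each $D_i$, while only Weil, still admits a pullback in the weak sense needed here'' is not justified. (There is also a sign slip: with your conventions $D_i^Y-\pi^*D_i$, not $\pi^*D_i-D_i^Y$, would be the effective exceptional piece; but this is secondary.)

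The paper's proof avoids pullbacks of Weil divisors entirely. It works locally on $X$ (taking $X$ affine), lets $E$ be the reduced divisorial exceptional locus, and sets $D_i^Y:=\overline D_i+rE$ for a uniform large $r$, where $\overline D_i$ is any lift with $\pi_*\overline D_i=D_i$. The content is a Noetherian argument: the sheaves $\pi_*\mathcal O_Y(\overline D+rE)$ form an increasing chain in $\mathcal O_X(D)$, hence stabilize by coherence; a single regular function $\phi$ with $\mathrm{div}(\phi)\geq D_i$ for all $i$ (available since $X$ is affine) then bounds the poles along $E$ uniformly in the $m_i$, giving a single $r$ that works for every combination $\sum_i m_iD_i$. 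Only after this is established does the $\mathbb R$-Cartier twist $M$ enter, via an approximation by $\mathbb Q$-Cartier divisors. Your ample-divisor idea does not supply a substitute for this finiteness step.
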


\begin{proof}If the statement is true for $F=0$, then it is true for every $F \geq 0$, so we assume that $F=0$ throughout. The question is local on $X$, so we may also assume that $X$ is affine. Let $E$ be the reduced
Weil divisor whose support is the divisorial component of the exceptional locus of $\pi$. For $D$ an $\mathbb R$-divisor on $X$, and $\overline D$ an $\mathbb R$-divisor on $Y$ with $\pi_*\overline D=D$, we have $\mathcal O_X(D)=\bigcup_{r\geq 0}\pi_*\mathcal O_Y(\overline{D}+rE)$. Then by coherence there exists $r_D$ such that 
\begin{equation}\label{eq:h0stop}
\pi_*\mathcal O_Y\bigl(\overline D+rE\bigr)=\mathcal O_X\bigl(D\bigr)\mbox{ for all }r\geq r_D.\end{equation}
Let $\phi$ be a regular function on $X$ such that 
$L:={\rm div}_X(\phi)\geq D_i$ for all $i$. 
Let $\overline D_i$ be $\mathbb R$-divisors on $Y$ such that 
$\pi_*\overline D_i=D_i$. 
For any $r \geq 0$, we have $\overline{D}_{i} + rE \leq E_{i}' + L$ for some effective $\pi$-exceptional $\mathbb{R}$-divisor $E_{i}'$.  By Lemma \ref{lem:pushpullCartier}, any global section of 
$\mathcal O_Y(\sum_im_i\overline D_i+rE)$ for any $r \geq 0$ is also a global section of 
$\mathcal O_X((\sum_im_i)L)$.
Thus the poles along $E$ of rational functions that are sections of 
$\sum_im_i\overline{D}_i+rE$ are bounded below by $-(\sum_im_i)\pi^*L$. 
This implies that there exists $r>0$ such that $H^0(Y,\sum_im_i(\overline{D}_i+(r+t)E))$ 
is independent of $t\geq 0$ for each $m_i\geq 0$. 
In particular it is equal to $H^0(\mathcal O_X(\sum_im_iD_i))$ by \eqref{eq:h0stop}.
Since $X$ is affine, this implies 
$\pi_*\mathcal O_Y(\sum_im_i(\overline{D}_i+rE))=\mathcal O_X(\sum_im_iD_i)$. Set $D^Y_i:=D_i+rE$.

We now show that if $M$ is an $\mathbb R$-Cartier $\mathbb R$-divisor on $X$, then $\pi^*M+\sum_im_iD^Y_i\geq 0$ if and only if $M+\sum_im_iD_i\geq 0$. Up to replacing 
$M$ by $M+{\rm div}_X(f)$, this completes the proof.
One implication is clear by the projection formula. 
Assume now $M+\sum_im_iD_i\geq 0$. 
If $M$ is a $\mathbb Q$-Cartier $\mathbb Q$-divisor, 
then $uM$ is a Cartier divisor for some positive integer $u$, 
and by the projection formula, 
$$\pi_*\mathcal O_Y\bigl(\pi^*(uM)+\textstyle{\sum}_i(um_i)D_i^Y\bigr)=
\mathcal O_X\bigl(uM\bigr)\otimes\pi_*\mathcal O_Y\bigl(\textstyle{\sum}_i(um_i)D_i^Y\bigr)=
\mathcal O_X\bigl(u(M+\textstyle{\sum}_im_iD_i)\bigr)$$ 
for all $m_i\in\mathbb R^+$. 
Thus if $1$ is a section of $\mathcal O_X(u(M+\sum_im_iD_i))$, 
then it is also a section of $\mathcal O_Y(u(\pi^*M+\sum_im_iD_i^Y))$. 

Assume now $M=\sum_ja_jM_j$ is an $\mathbb R$-combination of Cartier divisors, 
with $M+\sum_im_iD_i\geq 0$. 
We may further assume that $D_i$ or $-D_i$ is a prime divisor for each $i$.
As a condition on the $m_i$'s and $a_j$'s, the effectivity of $M+\sum_im_iD_i$ is a
system of linear inequalities with integer coefficients. Any of its real solutions can be
approximated arbitrarily close by rational solutions. 
We conclude from the case when $M$ is a $\mathbb Q$-Cartier $\mathbb Q$-divisor
by taking limits coefficientwise.
\end{proof}

The following corollary allows us to reduce questions about $\mathbb R$-divisors to $\mathbb R$-Cartier $\mathbb R$-divisors.

\begin{cor} \label{cor:makingcartier}
Let $D_i$ be a finite set of $\mathbb R$-divisors on a normal variety $X$. 
Then there exist a quasiprojective, normal variety $Y$, a proper birational morphism $\pi:Y\to X$ and $\mathbb R$-Cartier $\mathbb R$-divisors $D^Y_i$ on $Y$ such that $\pi_*D^Y_i=D_i$ and
$$
\pi_*\mathcal O_Y\bigl(G+\pi^*M+\textstyle{\sum}_im_iD^Y_i\bigr)=
\mathcal O_X\bigl(M+\textstyle{\sum}_im_iD_i\bigr)$$ 
holds for all $m_i\in\mathbb R^+$, all effective $\pi$-exceptional $\mathbb R$-divisors $G$ on $Y$ and all $\mathbb R$-Cartier $\mathbb R$-divisors $M$ on $X$.
\end{cor}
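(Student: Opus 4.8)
The plan is to deduce Corollary~\ref{cor:makingcartier} from Lemma~\ref{lem:wlift} by first producing a model on which all the $D_i$ become $\mathbb R$-Cartier, and then applying the lemma on that model. First I would observe that the union of the supports of the $D_i$ is a finite set of prime divisors on $X$, so by a standard normalization-of-blowup argument one can find a quasiprojective normal variety $Y_0$ together with a proper birational morphism $\rho:Y_0\to X$ such that the strict transform of each $\Supp(D_i)$, together with every $\rho$-exceptional divisor, is $\mathbb Q$-Cartier (for instance, blow up the ideal sheaves cutting out the $D_i$ and normalize, or simply invoke resolution/alteration-type results in the generality already used in the excerpt); in particular each $D_i$ has a well-defined strict transform $\widetilde D_i$ on $Y_0$ which is $\mathbb R$-Cartier, and $\rho_*\widetilde D_i = D_i$. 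Quasiprojectivity of $Y_0$ comes for free from the blowup construction.

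Next I would apply Lemma~\ref{lem:wlift} to the morphism $\rho:Y_0\to X$ and the finite collection $\{D_i\}$: this produces $\mathbb R$-divisors $D_i^{Y_0}$ on $Y_0$ with $\rho_*D_i^{Y_0}=D_i$ and
\[
\rho_*\mathcal O_{Y_0}\bigl(F+\rho^*M+\textstyle\sum_i m_i D_i^{Y_0}\bigr)=\mathcal O_X\bigl(M+\textstyle\sum_i m_i D_i\bigr)
\]
for all $m_i\in\mathbb R^+$, all effective $\rho$-exceptional $F$ and all $\mathbb R$-Cartier $M$ on $X$. Inspecting the proof of Lemma~\ref{lem:wlift}, the divisors produced there are of the shape $D_i^{Y_0}=\widetilde D_i+r E$ where $E$ is the reduced exceptional divisor and $r$ is a fixed positive real; since $\widetilde D_i$ is $\mathbb R$-Cartier by our choice of $Y_0$ and $E$ is $\mathbb R$-Cartier (it is a $\rho$-exceptional divisor on a model where exceptionals are $\mathbb Q$-Cartier), the $D_i^{Y_0}$ are themselves $\mathbb R$-Cartier. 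Renaming $Y:=Y_0$, $\pi:=\rho$, and $D^Y_i:=D_i^{Y_0}$, and absorbing the lemma's $F$ into the corollary's $G$, gives exactly the stated conclusion.

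The main obstacle — really the only genuine content beyond quoting Lemma~\ref{lem:wlift} — is arranging the model $Y$ so that the relevant divisors are $\mathbb R$-Cartier while simultaneously keeping $Y$ normal and quasiprojective and keeping control of the pushforward formula. The pushforward formula is handed to us by Lemma~\ref{lem:wlift} for \emph{any} proper birational $\pi$ from a normal variety, so the work is entirely in the first step; I would handle it by taking $Y$ to be the normalization of the blowup of $X$ along the product of the (fractional-ideal) sheaves $\mathcal O_X(-\lceil D_i\rceil)$ or similar, noting that on a blowup the exceptional Cartier divisor and the strict transforms of the blown-up loci are automatically Cartier, hence their normalizations pull back to $\mathbb Q$-Cartier divisors, and then checking that the $D_i^Y$ furnished by Lemma~\ref{lem:wlift} lie in the $\mathbb R$-span of these $\mathbb R$-Cartier classes. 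Once that is in place the rest is bookkeeping: verifying $\pi_* D^Y_i = D_i$ (immediate, since adding an exceptional divisor does not change the pushforward) and that a general effective exceptional $G$ can play the role of $F$ (clear from the "if true for $F=0$ then true for all $F\ge 0$" reduction already noted inside the proof of Lemma~\ref{lem:wlift}).
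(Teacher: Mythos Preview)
Your overall strategy matches the paper's: pass to a birational model where the $D_i$ become $\mathbb R$-Cartier, then rerun the argument of Lemma~\ref{lem:wlift}. The gap is in the step you flag as ``the only genuine content'': you assert that after a single normalized blow-up one can arrange both the strict transforms $\widetilde D_i$ \emph{and every $\rho$-exceptional prime divisor} to be $\mathbb Q$-Cartier. Neither claim is justified. On a normalized blow-up the relative $\mathcal O(-1)$ is Cartier, but its individual irreducible components need not be, so the reduced exceptional divisor $E$ appearing in the proof of Lemma~\ref{lem:wlift} is in general only Weil. Likewise the strict transform $\widetilde D_i$ differs from the Cartier total transform by exceptional components that are not known to be Cartier. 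Invoking ``resolution/alteration-type results'' does not help here: resolution is unavailable in positive characteristic, and alterations are only generically finite.

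The paper repairs exactly this point, and the fix is small but essential. First, one blows up the $D_i$ successively and takes for $D'_i$ the Cartier divisor coming from each $\mathcal O(-1)$ (not the strict transform); this gives $f:Z\to X$ with $D'_i$ $\mathbb R$-Cartier and $f_*D'_i=D_i$. Second, one performs an \emph{additional} normalized blow-up $g:Y\to Z$ of the exceptional locus of $f$; then the relative $\mathcal O(-1)$ for $g$ is a single effective \emph{Cartier} divisor $F$ whose support is the full exceptional locus of $\pi=f\circ g$. Setting $D_i^Y:=g^*D'_i+rF$ for $r\gg 0$ now gives $\mathbb R$-Cartier divisors, since both $g^*D'_i$ and $F$ are Cartier, and the proof of Lemma~\ref{lem:wlift} goes through with $F$ playing the role of $E$. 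Chow's lemma plus normalization then makes $Y$ quasiprojective. Your write-up would be correct once you replace the unsubstantiated claim about $E$ being $\mathbb Q$-Cartier with this second blow-up.
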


\begin{proof}
We may assume that $D_i$ or $-D_i$ is a prime divisor for each $i$. 
Successively normalize the blow-up of the birational transform 
 of each $D_i$, obtaining a birational morphism 
$f:Z\to X$ with $\mathbb R$-Cartier $\mathbb R$-divisors $D'_i$ such that $f_*D'_i=D_i$. 
Let $g:Y\to Z$ be the normalized blow-up of the exceptional locus of $f$. 
Let $\pi=f\circ g$. Then $\overline D_i:=g^*D'_i$ is an 
$\mathbb R$-Cartier $\mathbb R$-divisor with $\pi_*\overline D_i=D_i$. 
The relative $\mathcal O(-1)$ for $g$ is an effective Cartier divisor $F$ 
whose support is the exceptional locus of $\pi$. 
As in the proof of Lemma \ref{lem:wlift}, for $r\gg0$, 
we may set $D_i^Y:=\overline D_i+rF$.
To obtain $Y$ quasiprojective, apply Chow's Lemma and normalize.
\end{proof}

We have defined $\vol(D):=\limsup_{m\to\infty}\frac{h^0(mD)}{m^n/n!}$.
For $\mathbb R$-Cartier $\mathbb R$-classes on projective varieties, this definition of volume differs from the classical one (cf.\ \cite[Cor.2.2.45]{lazarsfeld04}). The definitions coincide for $\mathbb Z$-classes but 
in \cite{lazarsfeld04}
the volume of $\mathbb Q$-classes is defined by homogeneous extension from $\mathbb Z$ and for $\mathbb R$-classes it is given by continuous extension from $\mathbb Q$. 
We check that the definitions in fact agree. We also check that we can replace $\limsup$ by $\lim$.

\begin{thrm} \label{volprop}
Let $D$ be an  $\mathbb R$-divisor on a proper normal variety $X$ of dimension $n$. Then
\begin{enumerate}[i)]
\item $\vol(D)=\lim_{m\to\infty}\frac{h^0(mD)}{m^n/n!}$.
\item If $D$ is an $\mathbb R$-Cartier $\mathbb R$-divisor, then $\vol(D)$ agrees with the definition in \cite[Cor.2.2.45]{lazarsfeld04}.
\item (Kodaira lemma)  $\vol(D)>0$ if and only if for every  $\mathbb R$-divisor $B$ there exists $\epsilon>0$ and an effective  $\mathbb R$-divisor $C$ such that $D\sim_{\mathbb Q}\epsilon\cdot B+C$.
\item If $D'$ is an $\mathbb R$-divisor on $X$ such that $D'-D$ is a numerically trivial $\mathbb R$-Cartier $\mathbb R$-divisor, then $\vol(D)=\vol(D')$.
\end{enumerate}
\end{thrm}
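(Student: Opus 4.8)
The plan is to establish the four statements in the order (i) and (ii) together, then (iii), then (iv), reducing in each case to an $\mathbb R$-Cartier $\mathbb R$-divisor on a \emph{projective} normal variety by means of Corollary \ref{cor:makingcartier}. Applied to the finite collection of $\mathbb R$-divisors in play (the divisor $D$, together with the auxiliary divisor $B$ in part (iii)), that corollary produces a projective normal variety $Y$ (quasiprojective by the corollary, proper since $X$ is), a proper birational morphism $\pi\colon Y\to X$, and $\mathbb R$-Cartier lifts $D^Y$ with $\pi_*\mathcal O_Y(mD^Y)=\mathcal O_X(mD)$, so that $h^0(mD)=h^0(mD^Y)$ for every real $m>0$; in particular $\vol(D)=\vol(D^Y)$ and the question of whether the $\limsup$ is a $\lim$ passes between $X$ and $Y$ verbatim. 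We may also freely enlarge the ground field, since the relevant $h^0$'s are unchanged by flat base change; thus for (i) and (iii) we assume $K$ algebraically closed, and for (ii) we read ``the definition of \cite[Cor.2.2.45]{lazarsfeld04}'' as the evident homogeneous-then-continuous extension, whose standard properties (agreement with the $\limsup$-definition for $\mathbb Z$-classes, continuity on $N^1$, invariance under numerical equivalence, openness of the big cone) hold over any algebraically closed field; cf.\ also \cite{lm09}, \cite{kur06}.

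It then remains to prove (i) and (ii) for an $\mathbb R$-Cartier $\mathbb R$-divisor $D$ on a projective normal $X$. Write $D+H=\sum_k\mu_kC_k$ with $H$ an effective ample Cartier divisor, $\mu_k>0$, and $C_k$ effective Cartier divisors (possible by Lemma \ref{lem:effCartier}). Replacing each $\mu_k$ by nearby rationals from below and from above yields $\mathbb Q$-Cartier $\mathbb R$-divisors $D_1\le D\le D_2$ with $D_1,D_2\to D$. Then $h^0(mD_1)\le h^0(mD)\le h^0(mD_2)$ for all $m>0$ by $(*)$, while for $\mathbb Q$-Cartier divisors the $\limsup$-definition is already known to be a limit agreeing with the homogeneous extension from $\mathbb Z$-classes (the only point beyond homogeneity being the comparison of $h^0(\lfloor mD_i\rfloor)$ with $h^0$ along multiples of a common denominator; see \cite[\S2.2]{lazarsfeld04}). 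Letting $D_1,D_2\to D$ and using continuity of Lazarsfeld's volume, both $\liminf_m h^0(mD)/(m^n/n!)$ and $\limsup_m h^0(mD)/(m^n/n!)$ are squeezed to the value of Lazarsfeld's volume at $D$. This proves (i) and (ii) simultaneously, the non-big case included (there the squeeze forces the common value $0$), hence in general after the reduction above.

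For (iii): if $\vol(D)>0$, we pass to the projective model $Y$ and also lift a given $\mathbb R$-divisor $B$ to an $\mathbb R$-Cartier $B^Y$ by applying Corollary \ref{cor:makingcartier} to $\{D,B\}$; since $\vol(D^Y)>0$, the class of $D^Y$ lies in the big cone, which is open in $N^1(Y)_{\mathbb R}$, so $D^Y-\epsilon B^Y$ is big for all small $\epsilon>0$. A big $\mathbb R$-divisor $E$ satisfies $h^0(mE)\ge1$ for $m\gg0$ by (i), so $\lfloor mE\rfloor\sim E_m$ for some effective $\mathbb Z$-divisor $E_m$, whence $E\sim_{\mathbb Q}\tfrac1m(E_m+\{mE\})$ is $\mathbb Q$-linearly equivalent to an effective divisor; applying this to $E=D^Y-\epsilon B^Y$ and pushing forward (using that $\pi_*$ preserves effectivity and takes principal divisors to principal divisors) gives $D\sim_{\mathbb Q}\epsilon B+C$ with $C\ge0$. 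Conversely, given the stated condition, take $B=\pi_*A$ for an ample Cartier divisor $A$ on $Y$: from $D\sim_{\mathbb Q}\epsilon B+C$, $C\ge0$, we get $h^0(mD)\ge h^0(m\epsilon\,\pi_*A)\ge h^0_Y(\lfloor m\epsilon\rfloor A)$ along an arithmetic progression of $m$ (the last inequality by pushing forward sections), and the last term grows like $(m\epsilon)^nA^n/n!$, so $\vol(D)\ge\epsilon^nA^n>0$.

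For (iv): if $D$ itself is $\mathbb R$-Cartier then $D'=D+N$ is $\mathbb R$-Cartier and numerically equivalent to $D$, so $\vol(D')=\vol(D)$ by (ii) and the numerical invariance of Lazarsfeld's volume. In general, apply Corollary \ref{cor:makingcartier} to $D$: taking the twist $M=mN$ (which is $\mathbb R$-Cartier) and $G=0$ in its conclusion gives $\pi_*\mathcal O_Y\bigl(m(\pi^*N+D^Y)\bigr)=\mathcal O_X(mD')$, hence $h^0(mD')=h^0\bigl(m(\pi^*N+D^Y)\bigr)$ for all $m>0$; since $\pi^*N$ is numerically trivial on $Y$, the $\mathbb R$-Cartier case yields $\vol(\pi^*N+D^Y)=\vol(D^Y)=\vol(D)$, and the left-hand side equals $\vol(D')$. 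The main obstacle is part (i)/(ii): that the $\limsup$ is a genuine limit agreeing with Lazarsfeld's volume for $\mathbb R$-Cartier classes over an arbitrary algebraically closed field — which is where one must either invoke the Okounkov-body results of \cite{lm09}/\cite{kur06} or carry out the squeezing and floor-function arguments with care. The remaining reductions are routine bookkeeping with Corollary \ref{cor:makingcartier}.
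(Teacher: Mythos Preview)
Your reductions via Corollary~\ref{cor:makingcartier} and your treatments of (iii) and (iv) are correct and agree with the paper's approach. The difficulty is in your argument for (i)/(ii), which differs from the paper's and contains a genuine gap.

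First, a minor point: writing $D+H=\sum_k\mu_kC_k$ with $C_k$ effective Cartier via Lemma~\ref{lem:effCartier} requires $D+H\ge 0$ as a divisor, i.e.\ an effective Cartier $H$ dominating the negative part $D^-$. This is true on a projective variety (take $H$ in a high multiple of a very ample class with a section vanishing to high order along $\Supp D^-$), but you should say so; Lemma~\ref{lem:effCartier} alone does not provide it.

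The substantive gap is your claim that ``for $\mathbb Q$-Cartier divisors the $\limsup$-definition is already known to be a limit agreeing with the homogeneous extension.'' What is standard (and what \cite[\S2.2]{lazarsfeld04} gives) is the limit along multiples of a denominator $r$ with $rD_1$ Cartier. Your squeeze needs more: to bound $\liminf_m h^0(mD)/(m^n/n!)$ from below you need $\liminf_m h^0(mD_1)/(m^n/n!)\ge\Vol(D_1)$ over \emph{all} integers $m$, and knowing the limit along one arithmetic progression only bounds the $\liminf$ from above. The ``comparison of $h^0(\lfloor mD_i\rfloor)$ with $h^0$ along multiples of a common denominator'' is precisely the missing work: for $m=qr+s$ one must control $h^0(q(rD_1)+\lfloor sD_1\rfloor)$ where $\lfloor sD_1\rfloor$ is a non-Cartier Weil divisor. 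In the big case this can be done by absorbing $\lfloor sD_1\rfloor$ into a bounded Cartier perturbation, but the non-big case (showing $\limsup=0$) is not obviously easier for $\mathbb Q$-Cartier than for $\mathbb R$-Cartier divisors, so your reduction is circular there.

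The paper proceeds differently: it observes that $\{H^0(X,mD)\}_m$ is a graded linear series bounded by an ample, invokes Cutkosky's theorem \cite[Thm.~1.2]{cutkosky13b} to get the limit along multiples of $m_0=\gcd\{m:h^0(mD)\neq 0\}$, and then proves (ii) separately by sandwiching $\vol(D)$ (not $h^0(mD)$) between volumes of integral Cartier divisors, using $h^0(D)>0$ to write $D=E+\mathrm{div}(f)$ with $E\ge 0$. Once (ii) and (iii) are in hand, bigness forces $m_0=1$, completing (i). Your squeeze-on-$h^0$ idea can be made to work, but it requires essentially reproving the graded-linear-series limit result rather than citing Lazarsfeld.
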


Most of the references used in the proof work over $\mathbb C$. \cite[\S2.2]{cutkosky13} and the references therein explain how to extend these
to arbitrary fields.

\begin{proof}
By Corollary \ref{cor:makingcartier}, we may assume that $X$ is projective and $D$ (hence also $D'$) and $B$ are $\mathbb R$-Cartier $\mathbb R$-divisors. Then there exists an ample $\mathbb Z$-divisor $H$ with $D\leq H$. Hence $H^0(X,mD)$ is a graded linear series. If $D$ is not big, then the limit is zero. Otherwise by \cite[Thm.1.2]{cutkosky13b} we have 
\begin{equation}\label{eq:vollim}\vol(D)=\lim_{m\to\infty}\frac{h^0(m\cdot m_0D)}{(m\cdot m_0)^n/n!}<\infty,\end{equation}
where $m_0={\rm gcd}\{m\in\mathbb Z\ |\ h^0(mD)\neq 0\}$.
We will return to showing that $m_0=1$. 

For now we prove $ii)$ and $iii)$. Provisionally denote by $\Vol(D)$ the volume of the $\mathbb R$-Cartier
$\mathbb R$-divisor $D$ in the sense of \cite[Cor.2.2.45]{lazarsfeld04}.
From \eqref{eq:vollim} we see that $\vol$ is also homogeneous, so that for a $\mathbb{Q}$-Cartier $\mathbb Q$-divisor $D$ we have $\vol(D) = \Vol(D)$.  

We first show that if $\vol(D)>0$, then $\vol(D)=\Vol(D)$.
By homogeneity we may assume $h^0(D)>0$. Then $D=E+{\rm div}(f)$ for some effective $\mathbb R$-Cartier $\mathbb R$-divisor $E$ and for some rational function $f$ on $X$.
By Lemma \ref{lem:effCartier}, we have
$E=\sum_ia_iE_i$ for some positive $a_i\in\mathbb R$ and effective Cartier $\mathbb Z$-divisors $E_i$. Then
$$\tfrac 1{m^n}\vol\bigl(\textstyle{\sum}_i\lfloor ma_i\rfloor E_i+{\rm div}(f^m)\bigr)\leq \vol(D)\leq \tfrac 1{m^n}\vol\bigl(\textstyle{\sum}_i\lceil ma_i\rceil E_i+{\rm div}(f^m)\bigr).$$
The LHS and RHS both converge to $\Vol(D)$ as $m$ grows. Furthermore if $\vol(D)>0$, then $\sum_i\lfloor ma_i\rfloor E_i+{\rm div}(f^m)$ is a big Cartier $\mathbb Z$-divisor for large enough $m$, hence it dominates some ample $\mathbb Q$-divisor by Kodaira's Lemma (cf. \cite[Cor.2.2.7]{lazarsfeld04}).

It remains to show that if $\Vol(D)>0$, then $\vol(D)>0$. First observe that if $\Vol(D)>0$, then $D$ is big in the sense of \cite[\S 2.2.B]{lazarsfeld04}, i.e. $D$ dominates an ample $\mathbb R$-divisor. Indeed by continuity (cf.\ \cite[Cor.2.2.45]{lazarsfeld04}) there exists a small ample $\mathbb R$-divisor $H$ such that $D-H$ is a $\mathbb Q$-Cartier $\mathbb Q$-divisor with $\Vol(D-H)>0$. Then the claim follows from Kodaira's Lemma. We can write 
$$D=\textstyle{\sum}_ia_i(H_i+{\rm div}(f_i))+\textstyle{\sum}_jb_jE_j,$$
where $H_i$ are ample effective $\mathbb Z$-divisors, $f_i$ are rational functions, $E_j$ are effective $\mathbb R$-Cartier $\mathbb Z$-divisors, $a_i$ and $b_j$ are positive real numbers.
Let $F$ be the union of the supports of ${\rm div}(f_i)$. There exists a real number $N>0$ such that
$\{ma_i\}{\rm div}(f_i)>-N\cdot F$ for all $i$ and all $m$. Furthermore there exists a positive integer $r$ such that for each $i$ the Weil divisor $rH_i-N\cdot F$ has a section given by some rational function $g_{i}$. In particular $$rH_i+\{ma_i\}{\rm div}(f_i)+{\rm div}(g_{i})>0.$$ Then
$$mD> \textstyle{\sum}_i\bigl((\lfloor ma_i\rfloor-r)H_i+\lfloor ma_i\rfloor{\rm div}(f_i)-{\rm div}(g_i)\bigr)+\textstyle{\sum}_j\lfloor mb_j\rfloor E_j.$$
The RHS is an effective big Cartier $\mathbb Z$-divisor for $m$ sufficiently large, therefore $\vol(D)>0$. The proof of $ii)$ is complete. Part $iii)$ follows easily from the projective and $\mathbb R$-Cartier case. We have showed that if $D$ is an $\mathbb R$-Cartier $\mathbb R$-divisor, and $\Vol(D)=\vol(D)>0$, then $mD$ is effective for $m$ large enough. This proves that $m_0=1$, and completes the proof of $i)$.
The volume function $\Vol$ is defined on the real N\' eron--Severi space $N^1(X)_{\mathbb R}$, and then part $iv)$ follows.
\end{proof}


\section{Divisorial Zariski decompositions} \label{divzarsec}

Let $X$ be a normal proper variety over a field $K$.  Let $D$ be a big $\mathbb R$-divisor. Following Nakayama (\cite{nakayama04}), for $\Gamma$ a prime divisor on $X$ we define 
$$\sigma_{\Gamma}(D)=\inf\bigl\{{\rm mult}_{\Gamma}D'\ |\ D' \sim_{\mathbb{R}} D,\ D'\geq0\bigr\},$$
where we write $D\sim_{\mathbb{R}}D'$ if there exist rational functions $f_i$ on $X$ and \emph{real} numbers $a_i$ such that $D-D'=\sum_i a_i\cdot{\rm div}(f_i)$. The basic properties of $\sigma_{\Gamma}(D)$ are studied by \cite{nakayama04} for smooth projective varieties in characteristic $0$, and by \cite{Mus11}, \cite{chms13} for smooth projective varieties in arbitrary characteristic.  We make the brief verifications necessary to extend these results to normal proper varieties as well. We start with the projective case.

\begin{lem} \label{lem:sigmaproperties}
Let $X$ be a normal projective variety and  $D$  a big $\mathbb R$-divisor.  Fix a prime divisor $\Gamma$.
\begin{enumerate}[i)]
\item We also have $\sigma_{\Gamma}(D) = \inf\bigl\{{\rm mult}_{\Gamma}D'\ |\ D' \sim_{\mathbb{Q}} D,\ D'\geq0\bigr\}$ and $$\sigma_{\Gamma}(D)=\lim_{m\to\infty}\tfrac1{m}\min\bigl\{{\rm mult}_{\Gamma}D''\ |\ D''\sim_{\mathbb Z} mD,\ D''\geq 0\bigr\}.$$
\item Let $A$ be an ample $\mathbb R$-Cartier $\mathbb R$-divisor.  Then $\lim_{\epsilon \searrow 0} \sigma_{\Gamma}(D + \epsilon A) = \sigma_{\Gamma}(D)$.
\item The $\mathbb R$-divisor $F:= D - \sigma_{\Gamma}(D)\Gamma$ has $\sigma_{\Gamma}(F) = 0$ and $\sigma_{\Gamma'}(F) = \sigma_{\Gamma'}(D)$ for any other prime divisor $\Gamma'$.  Furthermore the natural inclusion $H^{0}(X,mF) \hookrightarrow H^{0}(X,mD)$ is an equality for any positive real number $m$.
\item If 
$L$ is a numerically trivial $\mathbb R$-Cartier $\mathbb R$-divisor then $\sigma_{\Gamma}(D + L) = \sigma_{\Gamma}(D)$.  The induced function $\sigma_{\Gamma}: N^{1}(X) \to \mathbb{R}$ sending a numerical class $\alpha \in N^{1}(X)$ to $\sigma_{\Gamma}(D + \alpha)$ is continuous in a sufficiently small neighborhood of $0$.
\end{enumerate}
\end{lem}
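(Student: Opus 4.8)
The plan is to prove $i)$–$iv)$ in order, using three elementary properties of $\sigma_{\Gamma}$ that follow at once from the definition: positive homogeneity, $\sigma_{\Gamma}(\lambda D)=\lambda\sigma_{\Gamma}(D)$ for $\lambda>0$; subadditivity, $\sigma_{\Gamma}(D_1+D_2)\le\sigma_{\Gamma}(D_1)+\sigma_{\Gamma}(D_2)$ whenever all three divisors are big (add effective representatives); and $\sigma_{\Gamma}(H)=0$ for every ample $\mathbb R$-Cartier $\mathbb R$-divisor $H$ (write $H\sim_{\mathbb R}\sum a_iH_i$ with the $H_i$ very ample Cartier and move each $H_i$ off $\Gamma$). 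Only part $i)$ requires genuine care on a possibly non-$\mathbb Q$-factorial normal variety, and the main obstacle is the passage from $\mathbb R$-linear to $\mathbb Q$-linear equivalence. I would carry this out following \cite{nakayama04} (or \cite{Mus11}, \cite{chms13}), whose sole nontrivial input is Kodaira's lemma, now available as Theorem \ref{volprop}.$iii)$; one may also reduce to the projective $\mathbb R$-Cartier case via Corollary \ref{cor:makingcartier}, where $\sigma_{\Gamma}$ extends to a continuous function on the big cone of $N^1$ and rational classes are dense. Concretely: given effective $D'\sim_{\mathbb R}D$ and $\delta>0$, write $D\sim_{\mathbb Q}A_0+C_0$ with $A_0$ ample $\mathbb Q$-Cartier and $C_0\ge0$, replace $D'$ by $(1-t)D'+t(A_0+C_0)\sim_{\mathbb R}D$ for small $t>0$, which dominates $tA_0$, and use this ample slack to round the irrational coefficients of the defining $\mathbb R$-linear equivalence to rational ones while keeping the divisor effective and not raising $\mult_{\Gamma}$ by more than $\delta$; this gives the first formula in $i)$. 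The asymptotic formula then follows by Fekete's lemma: $a_m:=\min\{\mult_{\Gamma}D''\mid D''\sim_{\mathbb Z}mD,\ D''\ge0\}$ is finite for $m\gg0$ and subadditive, so $\lim a_m/m=\inf a_m/m$; clearing denominators in a near-optimal $\mathbb Q$-representative gives $\inf a_m/m\le\sigma_{\Gamma}(D)$, while $\tfrac1m D''\sim_{\mathbb Q}D$ gives the reverse inequality.

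For $ii)$, subadditivity together with $\sigma_{\Gamma}(\epsilon A)=0$ gives $\sigma_{\Gamma}(D+\epsilon A)\le\sigma_{\Gamma}(D)$ for all $\epsilon>0$, hence $\limsup_{\epsilon\searrow0}\sigma_{\Gamma}(D+\epsilon A)\le\sigma_{\Gamma}(D)$. For the reverse, suppose $\sigma_{\Gamma}(D+\epsilon A)<\sigma_{\Gamma}(D)-\delta$ for arbitrarily small $\epsilon>0$ and pick effective $G_{\epsilon}\sim_{\mathbb R}D+\epsilon A$ with $\mult_{\Gamma}G_{\epsilon}<\sigma_{\Gamma}(D)-\delta$. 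Writing $D\sim_{\mathbb Q}\epsilon_1A+C$ by Kodaira's lemma, so $G_{\epsilon}\sim_{\mathbb R}(\epsilon_1+\epsilon)A+C$, the effective divisor $\tfrac{\epsilon_1}{\epsilon_1+\epsilon}G_{\epsilon}+\tfrac{\epsilon}{\epsilon_1+\epsilon}C$ is $\mathbb R$-linearly equivalent to $D$ with $\mult_{\Gamma}\le\tfrac{\epsilon_1}{\epsilon_1+\epsilon}(\sigma_{\Gamma}(D)-\delta)+\tfrac{\epsilon}{\epsilon_1+\epsilon}\mult_{\Gamma}C\to\sigma_{\Gamma}(D)-\delta$; for $\epsilon$ small this is $<\sigma_{\Gamma}(D)$, contradicting the definition of $\sigma_{\Gamma}(D)$. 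Hence the limit exists and equals $\sigma_{\Gamma}(D)$ (here $D+\epsilon A$ is big since it dominates an ample divisor up to $\mathbb R$-linear equivalence).

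Part $iii)$ I would prove in the order: equality of cohomology, then $F$ big, then the two $\sigma$-identities. Since $F=D-\sigma_{\Gamma}(D)\Gamma\le D$ the inclusion $H^0(X,mF)\hookrightarrow H^0(X,mD)$ is clear; conversely if $0\ne f$ satisfies ${\rm div}(f)+mD\ge0$, then ${\rm div}(f)+mD$ is an effective divisor $\sim_{\mathbb Z}mD$, so $\mult_{\Gamma}({\rm div}(f)+mD)\ge\sigma_{\Gamma}(mD)=m\sigma_{\Gamma}(D)$ by homogeneity, whence ${\rm div}(f)+mF=({\rm div}(f)+mD)-m\sigma_{\Gamma}(D)\Gamma\ge0$ and $f\in H^0(X,mF)$; so $H^0(X,mF)=H^0(X,mD)$ for every real $m>0$, and in particular $\vol(F)=\vol(D)>0$. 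Now $D'\mapsto D'-\sigma_{\Gamma}(D)\Gamma$ is a bijection from the effective $\mathbb R$-divisors $\mathbb R$-linearly equivalent to $D$ onto those $\mathbb R$-linearly equivalent to $F$ — well defined since every effective representative of $D$ has $\Gamma$-multiplicity at least $\sigma_{\Gamma}(D)$, with inverse $F'\mapsto F'+\sigma_{\Gamma}(D)\Gamma$ — and taking $\mult_{\Gamma}$, respectively $\mult_{\Gamma'}$, along it yields $\sigma_{\Gamma}(F)=\sigma_{\Gamma}(D)-\sigma_{\Gamma}(D)=0$ and $\sigma_{\Gamma'}(F)=\sigma_{\Gamma'}(D)$.

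For $iv)$, if $L$ is numerically trivial and $\mathbb R$-Cartier then $\vol(D+L)=\vol(D)>0$ by Theorem \ref{volprop}.$iv)$, so $D+L$ is big; for ample $\mathbb R$-Cartier $A$ the divisor $L+\epsilon A$ is again ample $\mathbb R$-Cartier (ampleness of $\mathbb R$-Cartier divisors is numerical), so subadditivity and $\sigma_{\Gamma}(L+\epsilon A)=0$ give $\sigma_{\Gamma}(D+L+\epsilon A)\le\sigma_{\Gamma}(D)$, and letting $\epsilon\searrow0$ with part $ii)$ applied to $D+L$ gives $\sigma_{\Gamma}(D+L)\le\sigma_{\Gamma}(D)$; applying this to $D+L$ and $-L$ gives equality, so $\sigma_{\Gamma}$ descends to a well-defined function of the numerical class. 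For continuity near $0$, note that $\{\alpha\in N^1(X)\mid D+\alpha\text{ is big}\}$ is an open convex neighbourhood of $0$ — openness because if $D+\alpha_0$ is big then $D+\alpha_0\sim_{\mathbb Q}A_1+C_1$ with $A_1$ ample $\mathbb Q$-Cartier and $C_1\ge0$ (Kodaira), and $A_1+(\alpha-\alpha_0)$ stays ample for $\alpha$ near $\alpha_0$ — and on this set $\alpha\mapsto\sigma_{\Gamma}(D+\alpha)$ is convex by subadditivity and homogeneity, hence continuous. The rational-approximation step in $i)$ is the only real difficulty; granting it, $ii)$–$iv)$ are formal consequences of homogeneity, subadditivity, the vanishing of $\sigma_{\Gamma}$ on ample $\mathbb R$-Cartier classes, and Kodaira's lemma.
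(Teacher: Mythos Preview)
Your proposal is correct and follows essentially the same route as the paper, whose entire proof is the sentence ``The proofs are analogous to \cite[Lem.III.1.4]{nakayama04} and \cite[Lem.III.1.7]{nakayama04}.''  Your write-up of homogeneity, subadditivity, vanishing on ample classes, the Kodaira--lemma trick for $ii)$, the bijection argument for $iii)$, and the convexity argument for continuity in $iv)$ is exactly the content of those lemmas; the rational--approximation step in $i)$ that you single out as the only genuine difficulty is indeed the one delicate point, and you rightly defer to the same references the paper cites.
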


\begin{proof}
The proofs are analogous to \cite[Lem.III.1.4]{nakayama04} and \cite[Lem.III.1.7]{nakayama04}.
\end{proof}

We can usually reduce questions involving $\sigma_{\Gamma}$ to the projective case by using Lemma \ref{lem:wlift} and the following:

\begin{lem} \label{lem:stricttransformgamma}
Let $\pi: Y \to X$ be a  birational morphism of normal, proper varieties.  Suppose that $D$ is a big $\mathbb R$-divisor on $X$. Assume that one of the following holds:
\begin{enumerate}[i)]
\item There exists  a big $\mathbb R$-divisor $L$ on $Y$ with $\pi_{*}L=D$ such that  for every $\mathbb R$-Cartier $\mathbb R$-divisor $M$ on $X$
the condition $D+M\geq 0$ holds iff $L+\pi^*M\geq 0$.
\item $X$ and $Y$ are projective and there exists a big $\mathbb R$-divisor  $L$ on $Y$
such that $\pi_*\mathcal O_Y(mL)=\mathcal O_X(mD)$ for all integers $m\geq 0$. 
\end{enumerate}
Then for any prime divisor $\Gamma$ on $X$ we have $\sigma_{\Gamma}(D) = \sigma_{\Gamma'}(L)$ where $\Gamma'$ is the birational transform of $\Gamma$ on $Y$.
\end{lem}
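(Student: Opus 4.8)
The plan is to reduce the statement to a direct comparison of the defining sets in the formula for $\sigma_\Gamma$. Recall $\sigma_\Gamma(D) = \inf\{\operatorname{mult}_\Gamma D' \mid D' \sim_{\mathbb R} D,\ D'\geq 0\}$, and similarly $\sigma_{\Gamma'}(L) = \inf\{\operatorname{mult}_{\Gamma'} L' \mid L' \sim_{\mathbb R} L,\ L'\geq 0\}$. The key observation is that pushforward and pullback of $\mathbb R$-linear equivalences give natural maps between effective divisors $\mathbb R$-linearly equivalent to $D$ and those $\mathbb R$-linearly equivalent to $L$, and that under either hypothesis i) or ii) these maps interact well with taking $\operatorname{mult}_\Gamma$ versus $\operatorname{mult}_{\Gamma'}$. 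So I would establish the two inequalities $\sigma_\Gamma(D) \leq \sigma_{\Gamma'}(L)$ and $\sigma_{\Gamma'}(L) \leq \sigma_\Gamma(D)$ separately.

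For $\sigma_\Gamma(D) \leq \sigma_{\Gamma'}(L)$: given $L' \sim_{\mathbb R} L$ effective, write $L' = L + \sum_i a_i \operatorname{div}_Y(f_i)$. Push forward: $\pi_* L' = D + \sum_i a_i \operatorname{div}_X(f_i) =: D'$ is $\mathbb R$-linearly equivalent to $D$, and it is effective (pushforward of effective is effective). Moreover $\operatorname{mult}_\Gamma D' = \operatorname{mult}_{\Gamma'} L'$, because $\Gamma'$ is the birational transform of $\Gamma$ and $\pi$ is birational, so the coefficient along $\Gamma$ of $\pi_* L'$ equals the coefficient of $L'$ along $\Gamma'$ (exceptional components are killed, non-exceptional non-$\Gamma'$ components do not map onto $\Gamma$). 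Taking infima over all such $L'$ gives the desired inequality. This direction uses only that $\pi$ is birational and does not need hypothesis i) or ii).

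For the reverse inequality $\sigma_{\Gamma'}(L) \leq \sigma_\Gamma(D)$: this is where the hypotheses enter, and it is the main obstacle, because pulling back an effective $D' \sim_{\mathbb R} D$ along $\pi$ need not give an effective divisor on $Y$ (when $D$ is only $\mathbb R$-Weil). Under hypothesis i), given effective $D' \sim_{\mathbb R} D$, write $D' = D + M'$ where $M' = \sum_i a_i \operatorname{div}_X(f_i)$ is $\mathbb R$-Cartier; then $D + M' \geq 0$, so by hypothesis $L + \pi^* M' \geq 0$, and $L' := L + \pi^* M' \sim_{\mathbb R} L$ is effective with $\operatorname{mult}_{\Gamma'} L' = \operatorname{mult}_{\Gamma'} L + \operatorname{mult}_{\Gamma'}\pi^* M'$; since $\pi^* M' = \sum_i a_i \operatorname{div}_Y(f_i)$ and $\Gamma'$ is the birational transform, $\operatorname{mult}_{\Gamma'}\pi^* M' = \operatorname{mult}_\Gamma M'$, so $\operatorname{mult}_{\Gamma'} L' \leq \operatorname{mult}_\Gamma D'$ provided $\operatorname{mult}_{\Gamma'} L \leq \operatorname{mult}_\Gamma D$ — but actually I should be more careful and argue directly that $\operatorname{mult}_{\Gamma'}L' = \operatorname{mult}_\Gamma D'$ after noting the decomposition is compatible, so taking infima gives the claim. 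Under hypothesis ii), instead of pulling back individual $\mathbb R$-divisors I would use Lemma \ref{lem:sigmaproperties}.i) to compute $\sigma$ via $\mathbb Z$-divisors: for $D'' \sim_{\mathbb Z} mD$ effective, the condition $\pi_*\mathcal O_Y(mL) = \mathcal O_X(mD)$ means $H^0(X, mD) = H^0(Y, mL)$ compatibly, so the complete linear system $|mD|$ on $X$ corresponds to $|mL|$ on $Y$ via strict transform, whence $\min\{\operatorname{mult}_\Gamma D'' : D'' \in |mD|\} \geq \min\{\operatorname{mult}_{\Gamma'} L'' : L'' \in |mL|\}$ (the strict transform of a member of $|mD|$ is a member of $|mL|$ with $\operatorname{mult}_{\Gamma'} \leq \operatorname{mult}_\Gamma$), and dividing by $m$ and letting $m \to \infty$ gives $\sigma_{\Gamma'}(L) \leq \sigma_\Gamma(D)$. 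In both cases, combined with the first inequality, we obtain the equality $\sigma_\Gamma(D) = \sigma_{\Gamma'}(L)$.
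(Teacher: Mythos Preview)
Your approach matches the paper's: for i) you exploit the bijection between $|D|_{\mathbb R}$ and $|L|_{\mathbb R}$ given by $D+M \leftrightarrow L+\pi^*M$ for $\mathbb R$-linearly trivial $M$, and for ii) you invoke Lemma~\ref{lem:sigmaproperties}.i) together with the identification $H^0(X,mD)=H^0(Y,mL)$. One minor correction: your forward inequality $\sigma_\Gamma(D)\leq\sigma_{\Gamma'}(L)$ via pushforward is not hypothesis-free as you claim---it uses $\pi_*L=D$, which is explicit in i) but must be extracted from ii) (e.g.\ by localizing the sheaf identity at the generic point of $\Gamma$); alternatively, under ii) the bijection $|mD|\leftrightarrow|mL|$ given by $mD+\mathrm{div}(f)\leftrightarrow mL+\mathrm{div}(f)$ already handles both inequalities at once, which is how the paper phrases it.
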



\begin{proof}By letting $M$ range through the $\mathbb R$-linearly trivial divisors on $X$ we immediately obtain $i)$. Part $ii)$ is a consequence of Lemma \ref{lem:sigmaproperties}.$i)$ and the fact that $\pi_*$ induces an equality of global sections for sheaves.   
\end{proof}

\begin{rmk}
Let $X$ be a normal proper variety.  Suppose that $D$ is a big $\mathbb R$-Weil $\mathbb{R}$-divisor on $X$.  Then there are at most finitely many prime divisors $\Gamma$ such that $\sigma_{\Gamma}(D) > 0$ (since $0\leq \sigma_{\Gamma}(D)\leq{\rm mult}_{\Gamma}(D')$ for any fixed effective $D'\sim_{\mathbb R}D$.)\qed
\end{rmk}






We can now define
\begin{equation}\label{defnegativepart}
N_{\sigma}(D) = \sum_{\Gamma\mbox{ prime divisor on }X} \sigma_{\Gamma}(D) \cdot
\Gamma \qquad \mbox{and}\qquad P_{\sigma}(D) = D - N_{\sigma}(\Gamma).
\end{equation}
We call the decomposition $D=P_{\sigma}(D)+N_{\sigma}(D)$ the \emph{divisorial Zariski decomposition} of $D$. 

\begin{defn}We say that a big $\mathbb R$-divisor $D$ is \emph{movable} if $N_{\sigma}(D)=0$ or equivalently $D=P_{\sigma}(D)$.
\end{defn}

\begin{rmk}\label{movlim}Let $D$ be a big, movable $\mathbb R$-divisor on a normal proper variety $X$. Let $D'\sim_{\mathbb R}D$ with $D'\geq 0$. Then $D'=P_{\sigma}(D')$ is the componentwise limit
of the divisors $D'_m:=D'-\frac 1m\min\{{\rm mult}_{\Gamma}D''\ |\ D''\sim_{\mathbb Z} mD',\ D''\geq 0\}$.
(This is Lemma \ref{lem:sigmaproperties}.$i)$ when $X$ is projective, and we can reduce to this case via Lemma \ref{lem:stricttransformgamma}.$i)$.)
Observe that $|mD'_m|$ is a linear series without fixed divisorial components for large $m$. In this sense, we understand movable $\mathbb R$-divisors as limits of divisors moving in linear series without fixed divisorial components.
\end{rmk}

\begin{rmk}If $D$ is a big and nef $\mathbb R$-Cartier $\mathbb R$-divisor, then $D$ is movable. 
\end{rmk}

\begin{defn}\label{def:amplecod1} Let $X$ be a normal variety. An $\mathbb R$-divisor $A$ is {\it ample in codimension 1}
if there exists a closed subset $Z\subset X$ of codimension at least $2$
such that $A|_{X\setminus Z}$ is an ample $\mathbb R$-Cartier $\mathbb R$-divisor.
\end{defn}

The following lemma shows that all normal varieties admit such divisors. 

\begin{lem}\label{lem:pushacd1} Let $\pi:Y\to X$ be a proper, generically finite, dominant 
morphism of normal varieties, and $A$ an $\mathbb R$-divisor on $Y$ that is ample in codimension 1.
Then  $\pi_*A$ is ample in codimension 1.
\end{lem}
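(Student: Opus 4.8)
The strategy is to reduce to the case where $A$ is genuinely $\mathbb{R}$-Cartier and ample by excising the bad locus, and then use that the pushforward of an ample divisor under a finite map is ample. First I would let $Z \subset Y$ be a closed subset of codimension at least $2$ such that $A|_{Y \setminus Z}$ is an ample $\mathbb{R}$-Cartier $\mathbb{R}$-divisor, as furnished by Definition \ref{def:amplecod1}. Since $\pi$ is dominant and generically finite, the image $\pi(Z)$ is a constructible set whose closure $\overline{\pi(Z)}$ has dimension at most $\dim Z \le \dim Y - 2 = \dim X - 2$, so $\overline{\pi(Z)}$ has codimension at least $2$ in $X$. However, this set is not quite the locus we want to remove: the issue is that $\pi$ need not be finite, only generically finite, so there may be a divisorial exceptional locus on $Y$, and also $\pi$ may fail to be finite over some codimension-$\ge 2$ locus on $X$. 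Let me enlarge the subset: choose $W \subset X$ closed of codimension at least $2$ containing $\overline{\pi(Z)}$, the image of the divisorial exceptional locus of $\pi$ (whose image also has codimension $\ge 2$), and the non-finite locus of $\pi$ away from divisors, so that over $X \setminus W$ the morphism $\pi$ restricts to a \emph{finite} morphism $\pi^{-1}(X\setminus W) \to X \setminus W$ of normal varieties. Shrinking further if necessary I may assume $\pi^{-1}(W)$ has codimension at least $2$ in $Y$ and contains $Z$.

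Over the open set $U := X \setminus W$, write $V := \pi^{-1}(U)$; then $\pi|_V : V \to U$ is finite and $A|_V$ is an ample $\mathbb{R}$-Cartier $\mathbb{R}$-divisor (since $V \subseteq Y \setminus Z$). The key classical fact is that the pushforward of an ample $\mathbb{R}$-Cartier $\mathbb{R}$-divisor under a finite morphism is ample: for a finite surjective morphism $f: V \to U$ and an ample $\mathbb{R}$-Cartier $\mathbb{R}$-divisor $B$ on $V$, the divisor $f_* B$ is ample $\mathbb{R}$-Cartier on $U$. To see this one may write $B$ as a positive $\mathbb{R}$-combination of ample Cartier divisors using Lemma \ref{lem:effCartier} (or directly from the definition of ample $\mathbb{R}$-divisor), reduce to the Cartier case, and then use that for $L$ an ample line bundle on $V$ the norm/pushforward construction yields an ample line bundle on $U$ — equivalently, by Nakai–Moishezon or by the fact that $f$ finite means $f^* f_* L$ differs from a multiple of $L$ by an effective correction, so ampleness is preserved. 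Thus $\pi_*(A|_V) = (\pi_* A)|_U$ is an ample $\mathbb{R}$-Cartier $\mathbb{R}$-divisor on $U$, since pushforward of Weil divisors commutes with restriction to the open set $U$ away from the codimension-$\ge 2$ set $W$.

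It remains to check that $(\pi_* A)|_U$ really equals the restriction of the global Weil divisor $\pi_* A$ on $X$; this is because forming $\pi_*$ of an $\mathbb{R}$-Weil divisor is defined prime-divisor by prime-divisor via pushing forward cycles, and restriction to an open set simply discards components meeting its complement — and we arranged that no divisorial exceptional component of $\pi$ maps into $W$, while every prime divisor on $V$ is the restriction of a prime divisor on $Y$. Hence $\pi_* A$ restricted to $X \setminus W$ is ample $\mathbb{R}$-Cartier with $W$ of codimension at least $2$, which is exactly the assertion that $\pi_* A$ is ample in codimension $1$.

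\textbf{Main obstacle.} The routine linear-algebra reductions ($\mathbb{R}$-Cartier to Cartier, limits of rational approximations) are straightforward. The one genuinely careful point is the bookkeeping of \emph{which} codimension-$\ge 2$ locus to remove: one must simultaneously handle the divisorial exceptional locus of the generically finite (not finite) morphism $\pi$, the locus over which $\pi$ fails to be finite, and the original bad locus $Z$ of $A$, and verify that all of these have image (resp. preimage) of codimension at least $2$ so that restriction of Weil divisors behaves well on both sides. Once that locus is correctly isolated, invoking finiteness of $\pi$ over the complement and the preservation of ampleness under finite pushforward finishes the argument.
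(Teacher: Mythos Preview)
Your approach is essentially the paper's: excise a codimension-$2$ locus so that $\pi$ becomes finite and $A$ becomes genuinely ample, then invoke that finite pushforward preserves ampleness. The paper compresses your two pages of bookkeeping into one sentence (``By removing a suitable subset of codimension $2$ from $X$ we may assume that $\pi$ is finite and $A$ is ample on $Y$''), and additionally shrinks once more to make $\pi_*A$ itself $\mathbb R$-Cartier---a step you omit but which is harmless since normality gives it in codimension~$1$ anyway.

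One caution on your final step. After shrinking, $U$ and $V$ are only quasi-projective, so Nakai--Moishezon is not available, and your remark that ``$f^*f_*L$ differs from a multiple of $L$ by an effective correction'' is not literally correct for an arbitrary finite cover (the pullback of $f_*\Gamma$ involves all components over $f(\Gamma)$, weighted by ramification and residue degrees, and need not dominate $d\Gamma$). The paper sidesteps this by using the most elementary characterization of ampleness on a quasi-projective variety: a $\mathbb Q$-divisor $B$ is ample iff its multiples separate finite subsets of points, and if $|mB|$ separates finite subsets on $V$ then $|m\,\pi_*B|$ separates finite subsets on $U$ since any finite subset of $U$ has finite preimage in $V$. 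This works without properness and without any cycle-theoretic identity.
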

\begin{proof} By removing a suitable subset of codimension $2$ from $X$
we may assume that $\pi$ is finite and $A$ is ample on $Y$.  Note $\pi_{*}A$ is $\mathbb{R}$-Cartier in codimension $1$, so that by shrinking $Y$ and $X$ further we may assume $\pi_{*}A$ is also $\mathbb{R}$-Cartier.
If $B$ is a $\mathbb Q$-divisor whose multiples separate finite subsets on 
$Y$, then multiples of  $\pi_*B$ also separate finite subsets on 
$Y$. Thus  $\pi_*A$ is ample.
\end{proof}




\begin{lem}\label{concretesigma}Let $X$ be a normal proper variety over a field, 
 $\Gamma$  a prime divisor, 
and   $A$  an $\mathbb R$-divisor that is ample in codimension 1. 
Then
\begin{enumerate}[i)]
\item If $E$ is an $\mathbb R$-divisor, then, for $m$ sufficiently large,
$E+mA\sim_{\mathbb R}B_m$, for some $B_m\geq 0$ with 
$\Gamma\not\subset{\rm Supp}(B_m)$.
\item If $P$ is a big $\mathbb R$-divisor with $\sigma_{\Gamma}(P)=0$, 
then $P+A\sim_{\mathbb R}C$, for some $C\geq 0$ with 
$\Gamma\not\subset{\rm Supp}(C)$.
\end{enumerate}
\end{lem}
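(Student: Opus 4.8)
The plan is to restrict to the big open subset of $X$ on which $A$ is an honest ample $\mathbb R$-Cartier divisor, solve the problem there, and push the conclusion back to $X$.

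For the reduction, recall that for a closed $W\subseteq X$ of codimension $\ge 2$ the operations of restriction and Zariski closure set up mutually inverse bijections between $\mathbb R$-divisors on $X$ and on $X\setminus W$, and these respect $\sim_{\mathbb R}$, effectivity, supports and the multiplicity along $\Gamma$; here $\Gamma\not\subseteq W$ since $\Gamma$ has codimension $1$. Every $\mathbb R$-divisor is $\mathbb R$-Cartier outside a closed set of codimension $\ge 2$, because a prime divisor is Cartier at every point of codimension $\le 1$; combining this with Definition \ref{def:amplecod1} we may enlarge $W$ so that, on the quasi-projective normal variety $U:=X\setminus W$, the divisor $A|_U$ is an ample $\mathbb R$-divisor, $\Gamma|_U$ is a prime Cartier divisor, and --- for $i)$ --- $E|_U$ is $\mathbb R$-Cartier. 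In $ii)$, before passing to $U$ we first use the hypothesis $\sigma_\Gamma(P)=0$ on $X$: for any prescribed $\delta>0$ it supplies an effective $P'\sim_{\mathbb R}P$ with $\mult_\Gamma P'<\delta$, which we then restrict to $U$. After this it suffices to produce the desired effective divisors on $U$.

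The key ingredient I would isolate is that \emph{every ample $\mathbb R$-divisor $H$ on $U$ is $\sim_{\mathbb R}$ to an effective $\mathbb R$-divisor whose support does not contain $\Gamma|_U$}. Writing $H=\sum_i a_i\bigl(H_i+{\rm div}(f_i)\bigr)$ with $a_i>0$ and $H_i$ ample effective Cartier $\mathbb Z$-divisors, for $k\gg0$ the sheaf $\mathcal O_U(kH_i)$ is globally generated, so a section generating its stalk at the generic point of $\Gamma|_U$ cuts out an effective Cartier divisor $H_i'\sim_{\mathbb Z}kH_i$ with $\Gamma|_U\not\subseteq\Supp(H_i')$; then $H\sim_{\mathbb R}\sum_i\tfrac{a_i}{k}H_i'$ has the required properties. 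I will also use the elementary fact that adding a small $\mathbb R$-Cartier multiple of a fixed divisor to an ample $\mathbb R$-divisor again yields an ample $\mathbb R$-divisor (openness of ampleness).

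Then $i)$ follows at once: since $A|_U$ is ample and $E|_U$ is $\mathbb R$-Cartier, $mA|_U+E|_U=m\bigl(A|_U+\tfrac1m E|_U\bigr)$ is an ample $\mathbb R$-divisor for all large $m$, so the key ingredient (applied with $H=mA|_U+E|_U$) followed by descent to $X$ gives the claim. For $ii)$, choose $P'$ with $\delta:=\mult_\Gamma P'$ small enough that $A|_U+\delta\,\Gamma|_U$ is still an ample $\mathbb R$-divisor (legitimate because $\Gamma|_U$ is Cartier and ampleness is open). On $U$ we then have
$$P+A\ \sim_{\mathbb R}\ P'+A\ =\ \bigl(P'-\delta\,\Gamma\bigr)\ +\ \bigl(A+\delta\,\Gamma\bigr),$$
where $P'-\delta\,\Gamma\ge 0$ has zero multiplicity along $\Gamma$, and $A+\delta\,\Gamma$ is $\sim_{\mathbb R}$ to an effective divisor avoiding $\Gamma$ by the key ingredient; adding these and descending to $X$ finishes the proof. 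The step I expect to need the most care is the reduction itself --- verifying that effectivity, $\sim_{\mathbb R}$-equivalence and the condition "$\Gamma\not\subseteq\Supp(\cdot)$" all transfer faithfully across removal of a codimension-$\ge 2$ set, and that the divisors that must be $\mathbb R$-Cartier ($E$ in $i)$, $\Gamma$ in $ii)$) genuinely become so on a large enough open subset --- after which only classical facts about ample divisors on the quasi-projective $U$ are used.
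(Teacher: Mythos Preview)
Your proof is correct and follows essentially the same strategy as the paper: restrict to the open set where $A$ is a genuine ample $\mathbb R$-Cartier divisor, use that an ample $\mathbb R$-divisor can be moved off any prescribed prime divisor, and transfer the conclusion back to $X$. The paper is terser --- it just observes that $E+mA$ is ample in codimension~$1$ for large $m$ and then deduces $ii)$ from $i)$ applied with $E=\Gamma$ --- whereas you spell out the reduction and the decomposition $P'+A=(P'-\delta\Gamma)+(A+\delta\Gamma)$ explicitly, but the underlying argument is the same.
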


\begin{proof}
For $i)$, by working over the smooth locus of $X$ we see that $E+mA$ is
ample in codimension 1 for $m$ sufficiently large and then the statement
is clear.

Let $m$ be as in part $i)$ for $E=\Gamma$. By the definition of $\sigma_{\Gamma}$, there exists an effective $P_m\sim_{\mathbb R}P$
such that ${\rm mult}_{\Gamma}(P_m)\leq\frac 1m$. By $i)$, we have that
$P_m+A$ is $\mathbb R$-linearly equivalent to an effective 
$\mathbb R$-divisor $C$ without $\Gamma$ in its support.
\end{proof}

\begin{lem}\label{lem:movprop}Let $X$ be a normal proper variety. Then
\begin{enumerate}[i)]
\item If $D$ is a big $\mathbb R$-divisor, then $P_{\sigma}(D)$ is big and movable.
If $D$ is effective, then so is $P_{\sigma}(D)$.
\item If $P$ and $D$ are big $\mathbb R$-divisors with $P$ movable and $P\leq D$, then $P\leq P_{\sigma}(D)$.
\item If $\pi:Y\to X$ is a proper generically finite dominant morphism of normal proper varieties 
and $P$ is a big movable $\mathbb R$-divisor on $Y$, then $\pi_*P$ is also big and movable.
\item Let $A$ be an  $\mathbb R$-Cartier $\mathbb R$-divisor that is
ample in codimension 1. Then for every
 $\mathbb R$-divisor $E$ there exists $\epsilon_E>0$ such that $A-\epsilon_EE$ is big and movable. 
\end{enumerate}
\end{lem}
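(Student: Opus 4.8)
The plan is to prove the four parts roughly in order, using the concrete description of $\sigma_\Gamma$ from Lemma \ref{lem:sigmaproperties}, the ampleness-in-codimension-1 machinery of Lemma \ref{concretesigma} and Lemma \ref{lem:pushacd1}, and the pushforward behavior of volumes from Theorem \ref{volprop}. Throughout one may, by Lemma \ref{lem:stricttransformgamma}.$i)$ and Corollary \ref{cor:makingcartier}, reduce statements about $\sigma_\Gamma$ on a general normal proper variety to the projective case, where the results of Nakayama (as recorded in Lemma \ref{lem:sigmaproperties}) apply directly; so I would first reduce each part to $X$ projective.

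For $i)$: that $P_\sigma(D)$ is effective when $D$ is effective is immediate from the definition, since $\sigma_\Gamma(D)\le\mathrm{mult}_\Gamma D$ for every prime $\Gamma$. That $P_\sigma(D)$ is big is Kodaira's lemma (Theorem \ref{volprop}.$iii)$): write $D\sim_{\mathbb Q}\epsilon B+C$ with $B$ chosen so that it dominates $N_\sigma(D)$; then $P_\sigma(D)=D-N_\sigma(D)$ still dominates $\epsilon B - N_\sigma(D) + C$, and one checks this forces $\vol(P_\sigma(D))>0$ — more carefully, one uses that subtracting the fixed negative part does not kill bigness because every effective divisor $\mathbb R$-linearly equivalent to $D$ already contains $N_\sigma(D)$. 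Movability, i.e.\ $\sigma_{\Gamma'}(P_\sigma(D))=0$ for all $\Gamma'$, follows by iterating Lemma \ref{lem:sigmaproperties}.$iii)$ over the finitely many prime divisors with $\sigma_\Gamma(D)>0$: each step removes one component's negative contribution without changing the others, and after finitely many steps all $\sigma_{\Gamma'}$ vanish.

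For $ii)$: write $D = P_\sigma(D)+N_\sigma(D)$. Since $P\le D$, the divisor $D-P$ is effective, and for any prime $\Gamma$ we have $\sigma_\Gamma(D)\le\mathrm{mult}_\Gamma(D-P)+\sigma_\Gamma(P)=\mathrm{mult}_\Gamma(D-P)$ because $P$ movable gives $\sigma_\Gamma(P)=0$ and $\sigma_\Gamma$ is subadditive (this subadditivity is part of the standard package in Lemma \ref{lem:sigmaproperties}). Hence $N_\sigma(D)\le D-P$ componentwise, i.e.\ $P\le D-N_\sigma(D)=P_\sigma(D)$. For $iii)$: $\pi_*P$ is big since $\vol(\pi_*P)\ge\frac{1}{\deg\pi}\vol(\pi^*\pi_*P)\ge\frac{1}{\deg\pi}\vol(P)>0$ using $\pi^*\pi_*P\ge P$ and Theorem \ref{volprop}.$ii)$ (equivalently one cites the pushforward of a movable divisor result referenced in the discussion of Proposition \ref{prop:Zunique2}); movability of $\pi_*P$ then follows from the compatibility $\pi_*N_\sigma(\pi^*(\pi_*P))$ vanishing together with $N_\sigma(\pi_*P)\le \pi_*N_\sigma$ of a suitable lift, exactly as used there. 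Finally for $iv)$: fix $A$ ample in codimension 1 and $E$ arbitrary. Working over $X\setminus Z$ with $Z$ of codimension $\ge 2$, $A$ is genuinely ample $\mathbb R$-Cartier there, so $A-\epsilon E$ is still ample on $X\setminus Z$ for $\epsilon$ small, hence big; and being movable is detected in codimension 1, so $\sigma_\Gamma(A-\epsilon E)$ is computed on the open locus where $A-\epsilon E$ is ample $\mathbb R$-Cartier, where it vanishes — more precisely, use Lemma \ref{concretesigma}.$i)$ (or rather its proof) to see that a small perturbation of an ample-in-codimension-1 divisor remains $\mathbb R$-linearly equivalent, for each prime $\Gamma$, to an effective divisor avoiding $\Gamma$.

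The main obstacle I expect is part $i)$, specifically verifying that $P_\sigma(D)$ is \emph{big}: one has to argue that subtracting the negative part $N_\sigma(D)$ — which may be an honest $\mathbb R$-divisor with irrational coefficients — does not destroy positivity, and this is where Kodaira's lemma in the form of Theorem \ref{volprop}.$iii)$ must be invoked with some care about the choice of the auxiliary divisor $B$. The remaining parts are formal once the standard subadditivity and continuity properties of $\sigma_\Gamma$ (Lemma \ref{lem:sigmaproperties}) and the volume pushforward estimate (Theorem \ref{volprop}.$ii)$) are in hand.
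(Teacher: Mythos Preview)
Your arguments for parts $i)$, $ii)$, and $iv)$ are essentially sound. In fact your proof of $ii)$ is cleaner than the paper's: the direct inequality $\sigma_\Gamma(D)\le\mult_\Gamma(D-P)+\sigma_\Gamma(P)$ (immediate, since $(D-P)+P'$ is an effective member of $|D|_{\mathbb R}$ for every effective $P'\sim_{\mathbb R}P$) avoids the paper's detour through $\sigma_\Gamma(D+\epsilon A)\le\sigma_\Gamma(P)+\sigma_\Gamma(N+\epsilon A)$ and the continuity limit $\epsilon\to 0$. For $i)$, note that bigness of $P_\sigma(D)$ is immediate from Lemma~\ref{lem:sigmaproperties}.$iii)$, since $h^0(mP_\sigma(D))=h^0(mD)$; Kodaira's lemma is not needed. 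For $iv)$ the paper takes a different route---reducing to the projective $\mathbb Z$-coefficient case and using global generation of $\mathcal O_X(mA-E)$---but your argument that $A-\epsilon E$ stays ample in codimension~$1$ is viable.

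Part $iii)$, however, has a genuine gap. Your bigness argument invokes $\vol(\pi_*P)\ge\frac{1}{\deg\pi}\vol(\pi^*\pi_*P)\ge\frac{1}{\deg\pi}\vol(P)$ via $\pi^*\pi_*P\ge P$, but neither step is available: when $\pi$ is only generically finite, $\pi_*P$ need not be $\mathbb R$-Cartier so $\pi^*\pi_*P$ is undefined, and even in the birational case the inequality fails outright whenever $P$ has a $\pi$-exceptional component (then $\pi_*$ kills it and $\pi^*\pi_*P$ cannot dominate $P$). Your movability argument is circular: you appeal to the compatibility of $N_\sigma$ with pushforward as ``referenced in the discussion of Proposition~\ref{prop:Zunique2}'', but that compatibility is Lemma~\ref{lem:nspush}, whose proof \emph{uses} the present part $iii)$. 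The paper's approach is entirely different and avoids both problems. For bigness it observes that an $\mathbb R$-divisor is big iff it dominates some divisor ample in codimension~$1$, and then invokes Lemma~\ref{lem:pushacd1} to see that $\pi_*$ preserves this. For movability it uses the concrete description of movable divisors from Remark~\ref{movlim} (limits of divisors moving in linear series without fixed divisorial components) and argues directly that pushing forward such a linear series cannot acquire a fixed component: if $\Gamma$ were fixed in $\pi_*V$, then $V$ would be covered by finitely many proper linear subspaces indexed by the divisorial components of $\pi^{-1}\Gamma$, which is impossible over an infinite base field.
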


\begin{proof}Part $i)$ is a consequence of Lemma \ref{lem:sigmaproperties}.$iii)$ in the projective case, and can be reduced to this case in general by Lemma \ref{lem:stricttransformgamma}.$i)$ and Lemma \ref{lem:wlift}. 

For part $ii)$, assume $D=P+N$ with $N$ effective. By Lemma \ref{lem:stricttransformgamma}.$i)$, and Lemma \ref{lem:wlift} we can assume that $X$ is projective. Let $A$ be an effective ample divisor.
For all prime divisors $\Gamma$ on $X$ we have $$\sigma_{\Gamma}(D+\epsilon A)\leq\sigma_{\Gamma}(P)+\sigma_{\Gamma}(N+\epsilon A)=\sigma_{\Gamma}(N+\epsilon A).$$ By summing over all $\Gamma$'s we obtain $N_{\sigma}(D+\epsilon A)\leq N_{\sigma}(N+\epsilon A)$, and hence $P_{\sigma}(D+\epsilon A)\geq P$. The continuity property in Lemma \ref{lem:sigmaproperties}.$ii)$ implies $P_{\sigma}(D)\geq P$.

In $iii)$, observe first that any divisor ample in codimension 1 is big. Furthermore, 
an $\mathbb R$-divisor is big if and only if it dominates some divisor ample in codimension 1.
From Lemma \ref{lem:pushacd1} it follows that if $P$ is big, then $\pi_*P$ is also big.


To settle the movability of $\pi_*P$, by Lemmas \ref{lem:sigmaproperties}.$i)$,  \ref{lem:stricttransformgamma}.$i)$ and Remark \ref{movlim}, it is enough to show that if $V$ is a linear series without fixed divisorial components on $Y$, then $\pi_*V$ spans a linear series without fixed divisorial components on $X$. 
By Remark  \ref{rmk:closedperfect} we may assume that the base field is infinite.
If $\Gamma$ is a prime divisor on $X$, let $\Gamma'_i$ with $1\leq i\leq r$ be the divisorial components of $\pi^{-1}\Gamma$. If $\pi_*V$ spans a linear series with a fixed component $\Gamma$, then $\mult_{\Gamma}Q>\epsilon$ for all $Q\in\pi_*V$ and for some $\epsilon>0$ by the finite dimensionality of $V$. Then 
$V$ is the union of the proper subspaces $V_i=\{R\in V\ |\ \mult_{\Gamma_i}R>\frac{\epsilon}{r\cdot\deg{\pi}}\}$. This is impossible over an infinite field.

Since global sections are determined outside any codimension 2 subset, it is enough to consider the projective case of $iv)$. By the lower convexity of $N_{\sigma}$, it is enough to treat the case when $A$ and $E$ are $\mathbb Z$-divisors with $A$ ample Cartier. Then $\mathcal O_X(mA-E)\simeq\mathcal O_X(-E)\otimes\mathcal O_X(A)^{\otimes m}$ is globally generated for large $m$. In particular the linear series $|mA-E|$ has no fixed components and $N_{\sigma}(A-\frac 1mE)=0$.
\end{proof}

\begin{rmk}\label{rmk:finitepullback}When $\pi:Y\to X$ is a finite morphism of normal proper varieties, for every $\mathbb R$-divisor $D$ on $X$ we can define $\pi^*D$ as the closure in $Y$ of $\pi_U^*D_U$, where $U\subset X$ is the smooth locus, and $\pi_U:Y\times_XU\to U$ is the induced finite morphism. Since ${\rm codim}(X\setminus U,X)\geq 2$, we see that $\pi^*$ respects linear equivalence (with $\mathbb Z$, $\mathbb Q$, and $\mathbb R$ coefficients).
\end{rmk} 

\begin{lem}\label{lem:nspush}Let $\pi:Y\to X$ be a generically finite morphism of normal proper varieties 
and  $D$  a big $\mathbb R$-divisor on $X$. Then
\begin{enumerate}[i)]
\item If $\pi$ is finite, then $N_{\sigma}(\pi^*D)=\pi^*N_{\sigma}(D)$.
\item If $\pi$ is only generically finite, but $D$ is an $\mathbb R$-Cartier $\mathbb R$-divisor, then
$\pi_*N_{\sigma}(\pi^*D)=(\deg\pi)\cdot N_{\sigma}(D)$.
\end{enumerate}
\end{lem}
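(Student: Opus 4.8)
The plan is to prove part $i)$ first and then deduce part $ii)$ from it together with the behavior of $N_\sigma$ under birational pullback. For part $i)$, fix a prime divisor $\Gamma'$ on $Y$ and let $\Gamma=\pi(\Gamma')$, which is a prime divisor on $X$ since $\pi$ is finite. I would compute $\sigma_{\Gamma'}(\pi^*D)$ directly from the infimum definition $\sigma_{\Gamma'}(\pi^*D)=\inf\{\mathrm{mult}_{\Gamma'}E'\mid E'\sim_{\mathbb R}\pi^*D,\ E'\geq 0\}$. The key observation is that finite pullback $\pi^*$ and finite pushforward $\pi_*$ set up compatible maps between the two sets of effective $\mathbb R$-divisors $\mathbb R$-linearly equivalent to $D$ and to $\pi^*D$ respectively: by Remark \ref{rmk:finitepullback}, $\pi^*$ respects $\mathbb R$-linear equivalence, so $D'\geq 0$, $D'\sim_{\mathbb R}D$ gives $\pi^*D'\geq 0$, $\pi^*D'\sim_{\mathbb R}\pi^*D$; conversely, given $E'\sim_{\mathbb R}\pi^*D$ effective, the pushforward $\pi_*E'$ is effective and $\mathbb R$-linearly equivalent to $\pi_*\pi^*D=(\deg\pi)D$. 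One then has to match up multiplicities. Writing $\pi^*\Gamma=\sum_i e_i\Gamma'_i$ with ramification indices $e_i$ over the components $\Gamma'_i$ of $\pi^{-1}\Gamma$, we get $\mathrm{mult}_{\Gamma'_i}(\pi^*D')=e_i\cdot\mathrm{mult}_{\Gamma}(D')$, which immediately gives $\sigma_{\Gamma'_i}(\pi^*D)\leq e_i\,\sigma_{\Gamma}(D)$. For the reverse inequality, given an effective $E'\sim_{\mathbb R}\pi^*D$ one averages over the Galois closure (or, in general, uses that $\mathrm{mult}_{\Gamma}(\pi_*E')\leq$ something controlled by $\mathrm{mult}_{\Gamma'_i}(E')$ via the local structure of $\pi$ at the generic point of $\Gamma$); dividing the relation on $Y$ by $\deg\pi$ and comparing with the infimum on $X$ yields $\sigma_{\Gamma'_i}(\pi^*D)\geq e_i\,\sigma_{\Gamma}(D)$.

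The cleanest route to the reverse inequality — and the step I expect to be the main obstacle — is to reduce to the Galois case. If $\pi$ is Galois with group $G$, then for effective $E'\sim_{\mathbb R}\pi^*D$ the average $\frac1{|G|}\sum_{g\in G}g^*E'$ is a $G$-invariant effective $\mathbb R$-divisor still $\mathbb R$-linearly equivalent to $\pi^*D$ (using that $G$ acts trivially on $\pi^*D$ since it descends from $X$), hence of the form $\pi^*D''$ for an effective $D''\sim_{\mathbb R}D$ by descent along $\pi$, with $\mathrm{mult}_{\Gamma'_i}$ of the average being $\geq \frac1{|G|}\mathrm{mult}_{\Gamma'_i}(E')$ on the orbit of $\Gamma'_i$ — actually equal on average over the orbit. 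This gives $\sigma_{\Gamma'_i}(\pi^*D)=e_i\sigma_\Gamma(D)$ and hence $N_\sigma(\pi^*D)=\pi^*N_\sigma(D)$ in the Galois case. For a general finite $\pi$, I would factor the Galois closure $\widetilde Y\to Y\to X$ and apply the Galois case to $\widetilde Y\to X$ and to $\widetilde Y\to Y$ (the latter needs $Y$ normal, which holds); comparing $\pi^*$ of the two decompositions and using that pullback along the finite map $\widetilde Y\to Y$ is injective on effective $\mathbb R$-divisors (multiplicities scale by ramification indices, all positive) forces the desired equality on $Y$. One subtlety to check: $\pi^*D$ is still big on $Y$, so $N_\sigma(\pi^*D)$ is defined — this follows since $\pi$ finite implies $\pi^*$ of a big divisor is big (e.g.\ from Theorem \ref{volprop} together with $\vol(\pi^*D)=(\deg\pi)\vol(D)$, or more directly since $\pi^*$ of something dominating a divisor ample in codimension 1 again dominates such a divisor by pulling back along the finite map over the smooth locus).

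For part $ii)$, $\pi$ is generically finite but possibly not finite, and $D$ is $\mathbb R$-Cartier. I would use Stein-type factorization or, more simply, resolve to reduce to a composite: by Corollary \ref{cor:makingcartier} and Lemma \ref{lem:stricttransformgamma} we may assume $X$ and $Y$ are projective; then take a common projective resolution and factor so that $\pi=\mu\circ\nu$ where $\nu$ is birational and $\mu$ is finite — more precisely, let $Z$ be the normalization of $X$ in $K(Y)$, giving a finite map $\mu:Z\to X$ and a birational map $\nu:Y\dashrightarrow Z$ which, after replacing $Y$ by a higher model, is a morphism. Since $D$ is $\mathbb R$-Cartier, $\mu^*D$ makes sense and $\nu^*(\mu^*D)=\pi^*D$. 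Now $N_\sigma$ is a birational invariant in the appropriate sense: $\nu_*N_\sigma(\nu^*(\mu^*D))=N_\sigma(\mu^*D)$, which follows from Lemma \ref{lem:stricttransformgamma}.$ii)$ (birational transforms of components, exceptional components contribute nothing after pushforward since $\nu_*$ kills exceptional divisors). Combining this with part $i)$ applied to the finite map $\mu$, namely $\mu_*N_\sigma(\mu^*D)=\mu_*\mu^*N_\sigma(D)=(\deg\mu)\,N_\sigma(D)$, and $\deg\pi=\deg\mu$, the projection formula $\pi_*=\mu_*\nu_*$ gives
$$\pi_*N_\sigma(\pi^*D)=\mu_*\nu_*N_\sigma(\nu^*\mu^*D)=\mu_*N_\sigma(\mu^*D)=(\deg\pi)\cdot N_\sigma(D),$$
as required. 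The one point needing care here is justifying $\nu_*N_\sigma(\nu^*\mu^*D)=N_\sigma(\mu^*D)$ for the birational morphism $\nu$ with $\mu^*D$ only $\mathbb R$-Cartier and big; this is exactly the content one extracts from Lemma \ref{lem:stricttransformgamma}.$i)$ (applied with $L=\nu^*\mu^*D$, using $\mathbb R$-Cartier-ness of $\mu^*D$ to get the effectivity equivalence) plus the observation that $\sigma$ of a $\nu$-exceptional prime divisor $\Gamma'$ in $\nu^*\mu^*D$ equals $\sigma$ of its image, which is zero if $\Gamma'$ maps to a subset of codimension $\geq 2$.
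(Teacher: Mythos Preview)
Your plan is essentially correct and would work, but it takes a markedly different route from the paper, and there is one soft spot worth flagging.

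\textbf{Comparison with the paper's proof.} The paper does not use Galois averaging or Stein factorization at all. Instead it proves both parts simultaneously by a short ``movable cone'' argument: one inequality, $N_\sigma(\pi^*D)\leq\pi^*N_\sigma(D)$ (resp.\ $\pi_*N_\sigma(\pi^*D)\leq(\deg\pi)N_\sigma(D)$), comes immediately from $\pi^*H^0(X,D)\subseteq H^0(Y,\pi^*D)$ and the projection formula. For the reverse, write $D=\tfrac1{\deg\pi}\pi_*P_\sigma(\pi^*D)+\tfrac1{\deg\pi}\pi_*N_\sigma(\pi^*D)$; since $\pi_*P_\sigma(\pi^*D)$ is big and movable by Lemma~\ref{lem:movprop}.$iii)$, Lemma~\ref{lem:movprop}.$ii)$ forces $\tfrac1{\deg\pi}\pi_*P_\sigma(\pi^*D)\leq P_\sigma(D)$, hence $\tfrac1{\deg\pi}\pi_*N_\sigma(\pi^*D)\geq N_\sigma(D)$. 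In the finite case this upgrades the inequality $N_\sigma(\pi^*D)\leq\pi^*N_\sigma(D)$ to an equality. This argument is characteristic-free, treats $i)$ and $ii)$ uniformly, and uses only results already established in the paper. Your approach is more hands-on and classical; it has the virtue of being self-contained (no need for Lemma~\ref{lem:movprop}), but it is longer and, as you anticipated, the reverse inequality in $i)$ is where the work is.

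\textbf{The soft spot.} Your Galois-closure reduction for $i)$ is fine over a field of characteristic zero, or more generally when $K(Y)/K(X)$ is separable. In positive characteristic $\pi$ may be inseparable, and then there is no finite Galois closure to average over. The fix is easy: factor $\pi$ as a purely inseparable map followed by a separable one. For the purely inseparable part there is a unique prime $\Gamma'$ over each $\Gamma$, and the direct push--pull inequality $\sigma_\Gamma(D)\leq\tfrac{f}{\deg\pi}\,\mathrm{mult}_{\Gamma'}(E')=\tfrac1e\,\mathrm{mult}_{\Gamma'}(E')$ already gives $e\,\sigma_\Gamma(D)\leq\sigma_{\Gamma'}(\pi^*D)$, matching the easy bound. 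For the separable part your Galois argument applies. You should make this explicit rather than leaving it at the vague ``or, in general, uses\ldots'' clause.

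\textbf{Minor point on $ii)$.} You do not need to replace $Y$ by a higher model: Stein factorization of the proper morphism $\pi$ directly produces $Y\xrightarrow{\nu}Z\xrightarrow{\mu}X$ with $\nu$ a birational \emph{morphism} and $Z=\mathrm{Spec}_X(\pi_*\mathcal O_Y)$ normal (normality of $Z$ follows from normality of $Y$). With that, your reduction $\nu_*N_\sigma(\nu^*\mu^*D)=N_\sigma(\mu^*D)$ via Lemma~\ref{lem:stricttransformgamma}.$i)$ (using that $\mu^*D$ is $\mathbb R$-Cartier since $D$ is) goes through cleanly.
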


\begin{proof}If $\pi$ is finite, then $N_{\sigma}(\pi^*D)\leq \pi^*N_{\sigma}(D)$ because $H^0(Y,\pi^*D)\supseteq \pi^*H^0(X,D)$. When $\pi$ is only generically finite and $D$ is $\mathbb R$-Cartier, the same argument and the projection formula (cf. \cite[Proposition 2.3.(c)]{fulton84}) prove $\pi_*N_{\sigma}(\pi^*D)\leq (\deg\pi)\cdot N_{\sigma}(D)$. On the other hand $D=\frac 1{\deg\pi}\pi_*P_{\sigma}(\pi^*D)+\frac1{\deg\pi}\pi_*N_{\sigma}(\pi^*D)$, and $\pi_*P_{\sigma}(\pi^*D)$ is big and movable by Lemma \ref{lem:movprop}.$iii)$. By Lemma \ref{lem:movprop}.$ii)$ it follows that $\frac 1{\deg\pi}\pi_*P_{\sigma}(\pi^*D)\leq P_{\sigma}(D)$ and hence $\frac1{\deg\pi}\pi_*N_{\sigma}(\pi^*D)\geq N_{\sigma}(D)$. Therefore in both $i)$ and $ii)$, 
$$\pi_*N_{\sigma}(\pi^*D)=(\deg\pi)\cdot N_{\sigma}(D).$$
When $\pi$ is finite, this forces equality in $N_{\sigma}(\pi^*D)\leq \pi^*N_{\sigma}(D)$.
\end{proof}

\begin{lem}\label{lem:negsections2}If $D$ is a big $\mathbb R$-divisor on the proper normal variety $X$, and $E\geq 0$ with 
${\rm Supp (E)}\subset{\rm Supp}(N_{\sigma}(D))$, then
\begin{equation*}
N_{\sigma}(D+E) = N_{\sigma}(D)+E
\end{equation*}
and
$$H^{0}(X,D)=H^0(X,D+E)=H^0(X,P_{\sigma}(D)+E)=H^{0}(X,P_{\sigma}(D)).$$
\end{lem}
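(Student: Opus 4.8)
The plan is to first establish the numerical/$\sigma$-theoretic identity $N_\sigma(D+E) = N_\sigma(D)+E$, and then deduce the chain of equalities of global sections from it. For the first part, I would argue componentwise on the values $\sigma_\Gamma$. Since $E \geq 0$ and $\operatorname{Supp}(E)\subseteq\operatorname{Supp}(N_\sigma(D))$, for every prime divisor $\Gamma$ we have that $\Gamma$ appears in $E$ only if $\sigma_\Gamma(D)>0$. The inequality $N_\sigma(D+E)\leq N_\sigma(D)+E$ is the easy direction: subadditivity of $\sigma_\Gamma$ (if $D+E\sim_{\mathbb R}D'+E'$ with $D'\geq 0$ effective realizing $\sigma_\Gamma(D)$ up to $\epsilon$, one can just add $E$) gives $\sigma_\Gamma(D+E)\leq\sigma_\Gamma(D)+\operatorname{mult}_\Gamma(E)$ for every $\Gamma$; summing over $\Gamma$ gives the bound. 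For the reverse inequality I would use Lemma \ref{lem:movprop}.$ii)$ applied to $P_\sigma(D+E)$: this is big and movable by Lemma \ref{lem:movprop}.$i)$, and I claim $P_\sigma(D+E)\leq P_\sigma(D)$. Indeed, the strategy is to show the effective divisor $N:=N_\sigma(D)+E-N_\sigma(D+E)$ is $\geq 0$ (which is exactly what subadditivity gave) and supported on $\operatorname{Supp}(N_\sigma(D))$, so that $P_\sigma(D+E)=D+E-N_\sigma(D+E)= P_\sigma(D)+N$ with $N\geq 0$ effective; then since $P_\sigma(D+E)$ is movable and $P_\sigma(D+E)\leq P_\sigma(D+E)+N' $ for... — more cleanly, write $D = P_\sigma(D+E) + \big(N_\sigma(D+E)-E\big)$ and observe $N_\sigma(D+E)-E$ may fail to be effective, so instead I would run the comparison on $D+E$ directly: $P_\sigma(D+E)$ is movable, $P_\sigma(D+E)\leq D+E$, but I want it $\leq$ something involving $D$. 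The correct route is Lemma \ref{lem:movprop}.$ii)$ with $P = P_\sigma(D+E)$ and the target $D+E$: this only re-proves movability. So the genuine argument is: by subadditivity $N_\sigma(D+E)\le N_\sigma(D)+E$, hence $P_\sigma(D+E)=(D+E)-N_\sigma(D+E)\ge (D+E)-N_\sigma(D)-E = P_\sigma(D)$; but also $P_\sigma(D+E)\le D+E$ and $P_\sigma(D)$ is... — instead apply Lemma \ref{lem:movprop}.$ii)$ to $P:=P_\sigma(D)$ isn't movable unless $D$ movable. The clean fix: since $P_\sigma(D+E)\ge P_\sigma(D)$ and both have the form $D+E$ minus an effective divisor, we get $N_\sigma(D+E)\le N_\sigma(D)+E$; and since $P_\sigma(D+E)$ is movable with $P_\sigma(D+E)\ge P_\sigma(D)$, and any prime $\Gamma$ with $\sigma_\Gamma(D)>0$ cannot have $\sigma_\Gamma\big(P_\sigma(D+E)\big)$ exceed... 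Here I would instead invoke: for $\Gamma\not\subset\operatorname{Supp}(N_\sigma(D))$ we have $\operatorname{mult}_\Gamma E=0$ and $\sigma_\Gamma(D)=0$, and by Lemma \ref{lem:movprop}.$ii)$-type reasoning $\sigma_\Gamma(D+E)=0$ too (adding an effective divisor supported elsewhere does not create a fixed component in codimension one in the limit — this uses Lemma \ref{concretesigma}.$ii)$ together with Lemma \ref{lem:sigmaproperties}.$i)$); while for $\Gamma\subset\operatorname{Supp}(N_\sigma(D))$, equality $\sigma_\Gamma(D+E)=\sigma_\Gamma(D)+\operatorname{mult}_\Gamma(E)$ follows because $\Gamma$ is already a fixed component of every member of $|m(D+E)|_{\mathbb R}$ with the full multiplicity.

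For the equalities of global sections, I would proceed as follows. The inclusions $H^0(X,P_\sigma(D))\subseteq H^0(X,D)$ and $H^0(X,P_\sigma(D)+E)\subseteq H^0(X,D+E)$ are automatic from $P_\sigma(D)\leq D$ and effectivity, and Lemma \ref{lem:sigmaproperties}.$iii)$ (applied one prime divisor at a time, peeling off $\sigma_\Gamma(D)\Gamma$) gives $H^0(X,mP_\sigma(D))=H^0(X,mD)$ for all real $m>0$; in particular $H^0(X,P_\sigma(D))=H^0(X,D)$. The same iterated application of Lemma \ref{lem:sigmaproperties}.$iii)$ applied to $D+E$, using the identity $N_\sigma(D+E)=N_\sigma(D)+E$ just proved, gives $H^0(X,P_\sigma(D+E))=H^0(X,D+E)$; and since $P_\sigma(D+E)=P_\sigma(D)$, we get $H^0(X,D+E)=H^0(X,P_\sigma(D))=H^0(X,D)$. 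Finally, for $H^0(X,P_\sigma(D)+E)$: a section of $\mathcal O_X(D+E)$ corresponds to an effective $\mathbb Z$-divisor $\mathbb R$-linearly (indeed linearly) equivalent to $\lfloor D+E\rfloor$, and by definition of $\sigma_\Gamma$ its multiplicity along each $\Gamma\subseteq\operatorname{Supp}(N_\sigma(D+E))$ is at least $\lfloor m\sigma_\Gamma(D+E)\rfloor$-type bounds — more simply, $\mathcal O_X(P_\sigma(D)+E)\subseteq\mathcal O_X(D+E)$ with the cokernel supported in $\operatorname{Supp}(N_\sigma(D))$, and every global section of the bigger sheaf already vanishes to the required order along those components by the definition of $N_\sigma$, hence lies in the subsheaf. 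This shows $H^0(X,P_\sigma(D)+E)=H^0(X,D+E)$, completing the square of equalities.

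The main obstacle I anticipate is the reverse inequality $N_\sigma(D+E)\geq N_\sigma(D)+E$ — equivalently, showing that adding the effective divisor $E$ (supported on the negative part) does not \emph{decrease} any $\sigma_\Gamma$, and conversely that no \emph{new} negative part is created along primes outside $\operatorname{Supp}(N_\sigma(D))$. The first half should be handled by the observation that any effective divisor $\mathbb R$-linearly equivalent to $D+E$, after subtracting $E$, is $\mathbb R$-linearly equivalent to $D$ — but subtracting $E$ may destroy effectivity, which is precisely where the hypothesis $\operatorname{Supp}(E)\subseteq\operatorname{Supp}(N_\sigma(D))$ enters: along such $\Gamma$ every effective representative of $D$ already has multiplicity $\geq\sigma_\Gamma(D)$, and a representative of $D+E$ has multiplicity $\geq\sigma_\Gamma(D)+\operatorname{mult}_\Gamma(E)$ along $\Gamma$, because otherwise subtracting $E$ would yield an effective representative of $D$ with multiplicity $<\sigma_\Gamma(D)$ along $\Gamma$, contradicting the definition of $\sigma_\Gamma(D)$. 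The second half (no new components) is the statement that $\sigma_{\Gamma'}(D+E)=\sigma_{\Gamma'}(D)=0$ for $\Gamma'\not\subset\operatorname{Supp}(N_\sigma(D))$, which follows from subadditivity $\sigma_{\Gamma'}(D+E)\leq\sigma_{\Gamma'}(D)+\operatorname{mult}_{\Gamma'}(E)=0$. Thus the hypothesis on $\operatorname{Supp}(E)$ is used in exactly one place and cleanly closes the argument; modulo this, everything reduces to bookkeeping with Lemma \ref{lem:sigmaproperties}.
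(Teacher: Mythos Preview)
Your overall architecture is sound: establish $N_\sigma(D+E)=N_\sigma(D)+E$ componentwise, then read off the $H^0$ equalities by sandwiching $P_\sigma(D)\le P_\sigma(D)+E\le D+E$ and using $P_\sigma(D+E)=P_\sigma(D)$. The $H^0$ portion is fine.

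The gap is in the reverse inequality $\sigma_\Gamma(D+E)\ge\sigma_\Gamma(D)+\operatorname{mult}_\Gamma(E)$. Your justification---``otherwise subtracting $E$ would yield an effective representative of $D$ with multiplicity $<\sigma_\Gamma(D)$''---is circular. If $D''\sim_{\mathbb R}D+E$ is effective, then $D''-E\sim_{\mathbb R}D$, but $D''-E$ is effective only if $\operatorname{mult}_{\Gamma'}(D'')\ge\operatorname{mult}_{\Gamma'}(E)$ for \emph{every} component $\Gamma'$ of $E$, which is precisely the collection of inequalities you are trying to prove. Nothing you have written rules out the possibility that along some $\Gamma_j$ the multiplicity of $D''$ drops below $e_j$; your attempts via Lemma~\ref{lem:movprop}.$ii)$ all run into the same circularity, as you yourself notice.

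The missing ingredient is convexity of $\sigma_\Gamma$ together with Lemma~\ref{lem:sigmaproperties}.$iii)$. Treat one component $\Gamma$ at a time and set $\psi(t)=\sigma_\Gamma(D+t\Gamma)$. Then $\psi$ is convex, satisfies $\psi(t_2)-\psi(t_1)\le t_2-t_1$ for $t_1<t_2$ (your easy direction), and by Lemma~\ref{lem:sigmaproperties}.$iii)$ one has $\psi(-\sigma_\Gamma(D))=0$. The average slope on $[-\sigma_\Gamma(D),0]$ is therefore exactly $1$, so convexity forces slope $\ge 1$ to the right of $0$, giving $\psi(t)\ge\sigma_\Gamma(D)+t$ and hence equality. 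Once $\sigma_\Gamma(D+e\Gamma)=\sigma_\Gamma(D)+e$ is known, every effective $D''\sim_{\mathbb R}D+e\Gamma$ has $\operatorname{mult}_\Gamma(D'')\ge e$, so $D''-e\Gamma$ \emph{is} effective and $\sim_{\mathbb R}D$; this then gives $\sigma_{\Gamma'}(D+e\Gamma)=\sigma_{\Gamma'}(D)$ for every other prime $\Gamma'$, and in particular $\operatorname{Supp}(N_\sigma(D+e\Gamma))=\operatorname{Supp}(N_\sigma(D))$. Now induct on the components of $E$. This is exactly the content of Nakayama's III.1.8, which the paper invokes; the paper's only addition is that in the non-projective case the ample divisor in Nakayama's formulation is replaced by a divisor ample in codimension~$1$.
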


\begin{proof}We argue just as in \cite[III.1.8 Lemma]{nakayama04} and \cite[III.1.9 Corollary]{nakayama04}. When $X$ is not projective,
we replace the ample $A$ from the proof of \cite[III.1.8 Lemma]{nakayama04}
by a divisor ample in codimension 1.
\end{proof}



\section{Divisorial augmented base locus}
\label{s:bdp}

The augmented base locus of an $\mathbb R$-Cartier $\mathbb R$-divisor
on a normal complex projective variety $X$ is defined in 
\cite[Definition 1.2]{elmnp06} as 
${\bf B}_{+}(D)=\bigcap_{D=A+E}{\rm Supp}(E)$, 
where $A$ is an ample $\mathbb R$-divisor and $E$ is an effective 
$\mathbb R$-Cartier $\mathbb R$-divisor. 
For normal proper varieties, we mimic this construction
by using divisors ample in codimension 1.  The resulting subset is  a good analogue of the augmented
base locus in codimension $1$.  

\begin{defn}\label{aug.bs.defn}
Let $D$ be a big $\mathbb R$-divisor on a normal proper
variety $X$. The \emph{divisorial augmented base locus} of $D$ is
the divisorial component $\bdp(D)$ of 
\begin{equation}\label{eq:bdpdef}\bigcap_{D=A+E}{\rm Supp}(E)\end{equation}
with the intersection being taken over all decompositions $D=A+E$ with
$A$ an $\mathbb R$-divisor, ample in codimension 1, and $E$ an effective $\mathbb R$-divisor. 

\end{defn}

The next lemma implies that if $X$ is projective then 
$\bdp(D)$ equals the divisorial part of ${\bf B}_{+}(D)$ and that
we can also compute $\bdp(D)$ in terms of just one divisor that is ample in codimension 1.

\begin{lem}\label{lem:bdpalternate}Let $X$ be a normal proper variety.  Let $D$ be a big $\mathbb{R}$-divisor and let $A$ be an $\mathbb R$-divisor that is ample in codimension 1 on $X$.
Then $\bdp(D)$ is the divisorial component of the intersection of the 
supports of all 
$D'\in |D-\epsilon A|_{\mathbb R}$ 
for all 
$\epsilon>0$.
\end{lem}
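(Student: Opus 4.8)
The plan is to test membership in the two divisorial sets one prime divisor at a time. Write $\mathcal B(D)$ for the closed set in \eqref{eq:bdpdef}, that is, the intersection of $\Supp(E')$ over all \emph{honest} equalities $D=A'+E'$ with $A'$ ample in codimension $1$ and $E'\geq 0$, and for $\epsilon>0$ write $\mathcal B_\epsilon$ for the intersection of $\Supp(D')$ over all $D'\in|D-\epsilon A|_{\mathbb R}$; the right-hand side of the lemma is the divisorial component of $\bigcap_{\epsilon>0}\mathcal B_\epsilon$. Since $D$ is big it dominates a divisor ample in codimension $1$, hence so does $D-\epsilon A$ for small $\epsilon$, so $D-\epsilon A$ is big and $|D-\epsilon A|_{\mathbb R}\neq\emptyset$ for such $\epsilon$ (Theorem \ref{volprop}.$iii)$); in particular both $\mathcal B(D)$ and $\bigcap_{\epsilon>0}\mathcal B_\epsilon$ are proper closed subsets of $X$, so a prime divisor $\Gamma$ lies in the divisorial component of either set exactly when it is contained in the set itself. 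Thus the lemma reduces to the following statement, to be proved for each prime divisor $\Gamma$: there is a decomposition $D=A'+E'$ with $A'$ ample in codimension $1$, $E'\geq 0$ and $\Gamma\not\subseteq\Supp(E')$ \emph{if and only if} there exist $\epsilon>0$ and $D''\in|D-\epsilon A|_{\mathbb R}$ with $\Gamma\not\subseteq\Supp(D'')$.

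For the forward direction I would start from such a decomposition $D=A'+E'$. On the open set $U\subseteq X$ with ${\rm codim}(X\setminus U)\geq 2$ on which both $A'$ and $A$ are ample $\mathbb R$-Cartier — this $U$ is quasi-projective, as it carries an ample Cartier divisor — the divisor $A'-\epsilon A$ is ample for all sufficiently small $\epsilon>0$, so $A'-\epsilon A$ is ample in codimension $1$. Since $D-\epsilon A=(A'-\epsilon A)+E'$, applying Lemma \ref{concretesigma}.$i)$ to the ample-in-codimension-$1$ divisor $A'-\epsilon A$ and to $E=0$, and dividing by the resulting integer, produces an effective $\mathbb R$-divisor $H_\Gamma\sim_{\mathbb R}A'-\epsilon A$ with $\Gamma\not\subseteq\Supp(H_\Gamma)$; then $H_\Gamma+E'\in|D-\epsilon A|_{\mathbb R}$ and $\Gamma\not\subseteq\Supp(H_\Gamma+E')$, as required.

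For the converse I would take $D''\in|D-\epsilon A|_{\mathbb R}$ with $\Gamma\not\subseteq\Supp(D'')$ and put $N:=D-\epsilon A-D''$, which is an $\mathbb R$-linear combination of principal divisors; thus $D=(\epsilon A+N)+D''$ is an honest equality of divisors. The key point is that $\epsilon A+N$ is again ample in codimension $1$: on the locus of codimension-$\geq 2$ complement where $\epsilon A$ is ample $\mathbb R$-Cartier we are adding the $\mathbb R$-linearly, hence numerically, trivial $\mathbb R$-Cartier divisor $N$, and ampleness of an $\mathbb R$-Cartier divisor on that (quasi-projective) locus depends only on its numerical class. Hence $D=(\epsilon A+N)+D''$ is a decomposition of the kind occurring in \eqref{eq:bdpdef} whose effective summand avoids $\Gamma$, so $\Gamma\not\subseteq\mathcal B(D)$.

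I expect the only real friction to be the bookkeeping around the notion ``ample in codimension $1$'': one needs it to be stable under multiplication by positive reals, under subtraction of small multiples of another divisor ample in codimension $1$, and under addition of $\mathbb R$-linearly trivial $\mathbb R$-Cartier divisors. Each of these follows by restricting to the open set of codimension-$\geq 2$ complement on which the divisor in question is honestly ample $\mathbb R$-Cartier — a quasi-projective variety, where the corresponding assertions about ample $\mathbb R$-divisors are standard (\cite[\S2]{lazarsfeld04}) — and then taking closures back to $X$, which does not affect divisorial supports. The remaining loose ends (nonemptiness of $|D-\epsilon A|_{\mathbb R}$ for small $\epsilon$, and properness of the two closed subsets above) are immediate from bigness of $D$ and Kodaira's lemma, Theorem \ref{volprop}.$iii)$.
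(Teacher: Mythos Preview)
Your argument is correct and follows essentially the same route as the paper. Both proofs check the two inclusions prime-divisor-by-prime-divisor, and both hinge on the same two facts: that adding an $\mathbb R$-linearly trivial divisor to something ample in codimension $1$ keeps it ample in codimension $1$ (this is your ``converse'' direction, which the paper disposes of in one line by saying the index set is a subset of that in \eqref{eq:bdpdef}), and an application of Lemma~\ref{concretesigma}.$i)$ to pass from an arbitrary decomposition $D=A'+E'$ to an element of $|D-\epsilon A|_{\mathbb R}$ avoiding $\Gamma$. The only cosmetic difference is that the paper applies Lemma~\ref{concretesigma}.$i)$ directly with the pair $(A',-A)$, whereas you first observe that $A'-\epsilon A$ is ample in codimension $1$ and then apply the lemma with $E=0$; these are equivalent manoeuvres. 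One small remark: in your converse direction you only need invariance of ``ample in codimension $1$'' under $\mathbb R$-linear equivalence, not under numerical equivalence, and that follows straight from the paper's definition of an ample $\mathbb R$-divisor without appealing to numerical statements on the open set.
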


\begin{proof}
Let $U$ denote the above intersection.
Its index set is a subset of the one in \eqref{eq:bdpdef}, therefore $U\supseteq\bdp(D)$. 
Let now $\Gamma$ be a prime divisor which is a component of the supports
of all $D'\in|D-\epsilon A|_{\mathbb R}$ for all sufficiently small $\epsilon>0$.
Let $D=A'+E$ with $A'$ ample in codimension 1 and $E\geq 0$.
By Lemma \ref{concretesigma}.$i)$, for all sufficiently small $\epsilon>0$ there exists $B_{\epsilon}\sim_{\mathbb R}\epsilon A$ such that 
$A'-B_{\epsilon}\geq 0$ and 
$\Gamma\not\subset{\rm Supp}(A'-B_{\epsilon})$.
Then 
$$|D-\epsilon A|\ni D-B_{\epsilon}=(A'-B_{\epsilon})+E.$$
Consequently $\Gamma$ is a component of ${\rm Supp}(E)$.
\end{proof} 

The relationship between $\bdp(D)$ and the  Zariski decomposition
is given by the following.

\begin{lem}\label{lem:bdpzariski} Let $X$ be a normal proper variety.  Let $D$ be a big $\mathbb R$-divisor 
and  let $A$  be an $\mathbb R$-divisor which is ample in codimension 1 on $X$.
Then $$\bdp(D)={\rm Supp}(N_{\sigma}(D-\epsilon A))$$
for all sufficiently small $\epsilon>0$.
\end{lem}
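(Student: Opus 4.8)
The plan is to prove the two inclusions separately, using Lemma \ref{lem:bdpalternate} to replace $\bdp(D)$ by the divisorial component of $\bigcap_{\epsilon>0}\bigcap_{D'\in|D-\epsilon A|_{\mathbb R}}\Supp(D')$, and to relate this to $N_{\sigma}(D-\epsilon A)$ via the definition of $\sigma_{\Gamma}$. First I would observe that by Lemma \ref{lem:sigmaproperties}.$ii)$ (continuity of $\sigma_{\Gamma}$ along $D-\epsilon A + \epsilon A = D$, applied to the big divisor $D-\epsilon_0 A$ and the ample-in-codimension-$1$, hence big, divisor $A$) the support $\Supp(N_{\sigma}(D-\epsilon A))$ stabilizes for all sufficiently small $\epsilon>0$: indeed $\sigma_{\Gamma}$ is monotone nonincreasing as $\epsilon$ decreases and there are only finitely many prime divisors with $\sigma_{\Gamma}(D-\epsilon_0 A)>0$, so the finite set $\{\Gamma : \sigma_{\Gamma}(D-\epsilon A)>0\}$ is nondecreasing in $1/\epsilon$ and bounded, hence eventually constant. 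So the right-hand side is well-defined, and it suffices to fix one small $\epsilon$ and prove the set equality for that $\epsilon$.

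For the inclusion $\Supp(N_{\sigma}(D-\epsilon A))\subseteq\bdp(D)$: if $\Gamma$ is a prime divisor with $\sigma_{\Gamma}(D-\epsilon A)>0$, then by definition every effective $D''\sim_{\mathbb R}D-\epsilon A$ has $\mult_{\Gamma}D''\geq \sigma_{\Gamma}(D-\epsilon A)>0$. Given any decomposition $D=A'+E$ with $A'$ ample in codimension $1$ and $E\geq 0$, I would use Lemma \ref{concretesigma}.$i)$ (with the roles arranged so that a small multiple of $A'$ absorbs $\epsilon A$): more precisely, since $A'$ is ample in codimension $1$, for $\delta>0$ small $A' - \delta\cdot(\text{something}\sim_{\mathbb R}\epsilon A)$ is $\mathbb R$-linearly equivalent to an effective divisor not containing $\Gamma$ — this is exactly the mechanism of Lemma \ref{lem:bdpalternate}'s proof. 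Writing $D - \epsilon A = (A' - B_{\epsilon}) + E$ with $B_{\epsilon}\sim_{\mathbb R}\epsilon A$, $A'-B_{\epsilon}\geq 0$ and $\Gamma\not\subset\Supp(A'-B_{\epsilon})$, the divisor $(A'-B_\epsilon)+E$ is effective and $\mathbb R$-linearly equivalent to $D-\epsilon A$, so its multiplicity along $\Gamma$ is at least $\sigma_{\Gamma}(D-\epsilon A)>0$; since $\mult_\Gamma(A'-B_\epsilon)=0$, we get $\mult_\Gamma E>0$, i.e.\ $\Gamma\subseteq\Supp(E)$. As this holds for every such decomposition, $\Gamma\subseteq\bdp(D)$.

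For the reverse inclusion, suppose $\Gamma$ is a prime divisor with $\sigma_{\Gamma}(D-\epsilon A)=0$; I want to exhibit a decomposition $D=A'+E$ with $A'$ ample in codimension $1$, $E\geq 0$, and $\Gamma\not\subseteq\Supp(E)$, which by Lemma \ref{lem:bdpalternate} shows $\Gamma\not\in\bdp(D)$. Here $P:=P_{\sigma}(D-\epsilon A) = D-\epsilon A - N_{\sigma}(D-\epsilon A)$ is big with $\sigma_{\Gamma}(P)=0$ (using $\Gamma\not\subseteq\Supp(N_\sigma(D-\epsilon A))$), so by Lemma \ref{concretesigma}.$ii)$, $P + \tfrac{\epsilon}{2}A\sim_{\mathbb R}C$ for some $C\geq 0$ with $\Gamma\not\subseteq\Supp(C)$ — provided $\tfrac{\epsilon}{2}A$ is ample in codimension $1$, which it is. Then
$$
D = \tfrac{\epsilon}{2}A + \bigl(C - \tfrac{\epsilon}{2}A + \tfrac{\epsilon}{2}A\bigr)\ \text{is unhelpful as written;}
$$
instead I would write $D = \tfrac{\epsilon}{2}A + \bigl(P+\tfrac{\epsilon}{2}A\bigr) + N_{\sigma}(D-\epsilon A)$, and since $P+\tfrac{\epsilon}{2}A\sim_{\mathbb R}C$ with $\Gamma\not\subseteq\Supp(C)$ and $\Gamma\not\subseteq\Supp(N_\sigma(D-\epsilon A))$, this realizes $D$ as $A' + E$ with $A'=\tfrac{\epsilon}{2}A$ ample in codimension $1$ and $E = C + N_{\sigma}(D-\epsilon A)\geq 0$ (after replacing by the linearly equivalent effective representative) with $\Gamma\not\subseteq\Supp(E)$. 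Hence $\Gamma\not\in\bdp(D)$, completing the proof. The main obstacle is bookkeeping: making sure all the "ample in codimension $1$" and effectivity conditions in Lemma \ref{concretesigma} are met with the right scalings of $A$, and handling the passage from "$\mathbb R$-linearly equivalent to effective" to genuine decompositions $D=A'+E$ — but these are exactly the manipulations already established in Lemmas \ref{concretesigma}, \ref{lem:bdpalternate}, and \ref{lem:sigmaproperties}.
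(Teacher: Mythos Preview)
Your proposal is correct and follows essentially the same route as the paper: stabilization of $\Supp(N_\sigma(D-\epsilon A))$, then Lemma~\ref{concretesigma}.$ii)$ for the inclusion $\bdp(D)\subseteq\Supp(N_\sigma(D-\epsilon A))$, and Lemma~\ref{lem:bdpalternate} plus the definition of $\sigma_\Gamma$ for the reverse. The paper is more economical in two places: it applies Lemma~\ref{concretesigma}.$ii)$ directly to $P=D-\epsilon A$ (you already have $\sigma_\Gamma(D-\epsilon A)=0$, so the detour through $P_\sigma(D-\epsilon A)$ is unnecessary), and for the inclusion $\Supp(N_\sigma(D-\epsilon A))\subseteq\bdp(D)$ it simply quotes Lemma~\ref{lem:bdpalternate} and the definition of $\sigma_\Gamma$ rather than unwinding to Definition~\ref{aug.bs.defn}; also note that your invocation of Lemma~\ref{lem:sigmaproperties}.$ii)$ is not quite applicable since $A$ is only ample in codimension~$1$, but your monotonicity argument for stabilization is fine on its own.
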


\begin{proof}Note that since ${\rm Supp}(N_{\sigma}(D-\epsilon A))$ is a closed set, for any sufficiently small $\epsilon > 0$ the sets ${\rm Supp}(N_{\sigma}(D-\epsilon A))$ all coincide.  Thus we may show that $\bdp(D)$ coincides with the intersection over all sufficiently small $\epsilon > 0$ of the sets ${\rm Supp}(N_{\sigma}(D-\epsilon A))$.

By Theorem \ref{volprop}.$iii)$, we see that $D-\epsilon A$
is big for sufficiently small $\epsilon>0$.
Let $\Gamma$ be a prime divisor on $X$. Assume $\sigma_{\Gamma}(D-\epsilon A)=0$. Lemma \ref{concretesigma}.$ii)$ shows
that $\Gamma\not\subset{\rm Supp}(D')$ for some $D'\in|D-\frac{\epsilon}2A|_{\mathbb R}$. Therefore $\bdp(D)\subseteq \cap_{\epsilon > 0} {\rm Supp}(N_{\sigma}(D-\epsilon A))$. 
The reverse inclusion is straightforward from the previous lemma 
and the definition of $\sigma_{\Gamma}(D-\epsilon A)$.
\end{proof}

\begin{rmk}Inspired by \cite[Lemma 1.14]{elmnp06} we define the \emph{divisorial restricted base locus} as 
$$
{\bf B}^{\rm div}_{-}(D):=\bigcup_A\bdp(D+A),
$$
where $A$ ranges through all $\mathbb R$-divisors on $X$ that are
ample in codimension 1.
One can show that if the base field $K$ is uncountable and $D$ is a big $\mathbb R$-divisor, then
${\bf B}^{\rm div}_{-}(D)={\rm Supp}(N_{\sigma}(D))$.
\end{rmk}

\nocite{*}
\bibliographystyle{amsalpha}
\bibliography{Volumes}

\end{document}